\newtheorem{theorem}{Theorem}[section]
\newtheorem{lemma}[theorem]{Lemma}
\newtheorem{proposition}[theorem]{Proposition}
\newtheorem{corollary}[theorem]{Corollary}
\theoremstyle{definition}
\newtheorem{assumption}{Assumption}[section]
\newtheorem{definition}{Definition}[section]
\theoremstyle{remark}
\newtheorem{remark}{Remark}[section]
\newcommand\bR{\mathbb{R}}
\newcommand*{\beq}{\begin{equation}}
\newcommand*{\eeq}{\end{equation}}
\newcommand{\E}{\mathbb{E}}
\newcommand{\R}{\mathbb{R}}
\newcommand{\bit}{\begin{itemize}}
\newcommand{\eit}{\end{itemize}}
\newcommand\D{\partial}
\begin{document}

\title{On the boundedness of solutions of SPDEs%\thanks{Grants or other notes
%about the article that should go on the front page should be
%placed here. General acknowledgments should be placed at the end of the article.}
}

\author{Konstantinos Dareiotis    \and
        M\'at\'e Gerencs\'er %etc.
}

%\authorrunning{Short form of author list} % if too long for running head

\maketitle

\begin{abstract}
In this paper estimates for the $L_\infty-$norm of solutions of parabolic SPDEs are derived. The result is obtained through iteration techniques, motivated by the work of  Moser  in deterministic settings. As an application of the main result, solvability of a class of semilinear SPDEs is established.
\end{abstract}

\section{Introduction}         \label{introduction}
In the present work we consider the following stochastic partial differential equation (SPDE) on $[0,T] \times Q$,
\begin{equation}                                          \label{SPDE}
du_t=(L_tu_t+\D_if^i_t+f^0_t)dt+(M^k_tu_t+g^k_t)dw^k_t, \ u_0 =\psi,
\end{equation}
where the operators $L_t$, and $M^k_t$ are given by 
$$
L_tu=\D_j(a^{ij}_t\D_iu)+b^i_t\D_iu+c_tu, \ M^k_tu=\sigma^{ik}_t\D_iu_t+\mu^k_tu,
$$
with merely bounded and measurable coefficients, and $Q$ is a bounded Lipschitz domain in $\bR^d$. We use the summation convention with respect to integer valued repeated indices. In particular, the summation for the parameters $i$ and $j$ takes place over the set $\{1,...,d\}$,  and for $k$ over the positive integers. We are interested in boundedness properties of weak solutions under a strong stochastic parabolicity condition. The corresponding problem in the deterministic case, has been extensively studied. The first results for non-degenerate equations in divergence form are due to  \cite{DG} and \cite{MOS} for the elliptic case and \cite{NASH} for both elliptic and parabolic equations. Later, the techniques of  \cite{MOS} were extended to the parabolic case in  \cite{MOS1}. The approach of \cite{DG} was also applied for parabolic equations (see for example \cite{LA}). In fact, in all these articles, not only boundedness, but stronger results are obtained, namely,  H\"older continuity and Harnack inequalities. Another proof of the parabolic Harnack inequality was given in \cite{FAB}.  H\"older estimates and Harnack inequality were also obtained  in \cite{SAF} and \cite{KRSAF}, for elliptic and parabolic equations in non-divergence form. More recently, these results were also proved for a wider class of parabolic equations, including, for example, the $p$-Laplacian as the driving operator (see \cite{DIBE} and references therein).

Boundedness of solutions of SPDEs can be proved through embedding theorems of Sobolev spaces. Such results can be obtained from $L_p-$theory, see e.g. \cite{KrylovLP}, for equations considered on the whole space. This approach, however, requires some regularity of the coefficients. For SPDEs where these regularity assumptions are dropped or weakened, the literature has been expanding recently. In \cite{BSPDE} a maximum principle is obtained for a class of backward SPDEs. Under the additional assumption $\sigma=0$, variants of the problem are treated in \cite{DDH}, \cite{HWW}, and \cite{Kim}, with methods that strongly rely on the absence of derivatives of $u$ in the noise term.  In \cite{DMS}, through the technique of Moser's iteration, introduced in \cite{MOS}, boundedness results are derived without posing regularity  assumptions on the coefficients, for a class of quasilinear equations, by staying in the $L_2-$framework. This served as a main motivation to our work.  However, in \cite{DMS}, it is assumed that there exist constants $\lambda> \beta>0$, such that for any $\xi \in \mathbb{R}^d$, one has $a^{ij}\xi_i\xi_j \geq \lambda|\xi|^2$ and $(72+1/2)\sigma^{ik}\sigma^{jk} \xi_i\xi_j \leq \beta|\xi|^2$.  Consequently, the important case of linear SPDEs appearing in filtering theory is not covered. In the present paper only the classical stochastic parabolicity condition will be assumed in order to get estimates for the uniform bound of the solution of equation \eqref{SPDE}. We note that the results of the present paper can also be extended to quasilinear equations under suitable conditions. Having accessibility in mind, in the present work such generalizations are not included.

With the use of our main theorem, existence and uniqueness results for semilinear SPDEs are derived, under a weak condition on the growth of the semi-linear term $f(u)$ (see equation \eqref{SLSPDE} in section \ref{section-semilinear}). We construct the solutions by using comparison techniques, adopted from \cite{GP}.

Let us introduce some of the notation that will be used through the paper (for general notions on SPDEs we refer to \cite{ROZ} and \cite{ROK}). We consider  
a complete probability space $(\Omega, \mathscr{F}, P)$. It is equipped with a 
right-continuous filtration $(\mathscr{F}_t)_{t\geq0}$, 
such that $\mathscr{F}_0$ contains all 
$P$-zero sets, and $\{w^k_t\}_{k=1}^{\infty}$ is a sequence of 
independent 
real valued  $\mathscr{F}_t$-Wiener processes on $\Omega$. 
The set of all compactly supported smooth functions on $Q$, will be denoted by $C^\infty_c(Q)$. We will use the notation  $H^1_0(Q)$ the space of all measurable functions $v$ on $Q$, vanishing on the boundary, such that $v$ and its generalized derivatives of first order lie in $L_2(Q)$. The inner product in $L_2(Q)$, will be denoted by $(\cdot,\cdot)$. 
For $p,r,q\in [1,\infty]$, the norm in $L_p(Q)$ will be denoted by $|\cdot |_p$, while the norm in $L_{r,q}:=L_r([0,T];L_q( Q))$ will be denoted by $\|\cdot\|_{r,q}$. If $q=r$, then for simplicity we will write $\|\cdot \|_r$ instead of $\|\cdot\|_{r,r}$. 
We set $\mathbf{L}_p:=L_p(\Omega, \mathscr{F}_0;L_p(Q))$, $\mathbb{L}_p:=L_p(\Omega\times [0,T], \mathscr{P} ;L_p(Q))$, and $\mathbb{L}_p(l_2):=L_p(\Omega\times [0,T], \mathscr{P} ;L_p(Q;l_2))$ where $\mathscr{P}$ is the predictable $\sigma$-algebra. The constants in the calculations, usually denoted by $N$, may change from line to line, but, unless otherwise noted, they always depend only on the structure constants of the equation (see Section \ref{section-mainresult}). 

The rest of the paper is organized as follows. In Section \ref{section-mainresult} the assumptions are formulated and the main theorems are stated. In Section \ref{section-preliminaries} preliminary results are collected, which are then used in the proof of the main theorem in Section \ref{section-proof}. In Section \ref{section-semilinear}, we apply our result, in combination with a comparison principle, to construct solutions for a class of semilinear SPDEs.
\section{Formulation and Main Results}\label{section-mainresult}

We pose the following conditions on equation \eqref{SPDE}. 
\begin{assumption}                    \label{as:coef and free terms}
i)  The coefficients $a^{ij}$, $b^i$ and $c$ are real-valued 
$\mathscr{P} \times \mathscr{B}(Q)$ measurable functions 
on $\Omega\times[0,T]\times Q$ and are  
bounded by  a constant $K\geq0$, for any $i,j=1,...,d$. 
The coefficients $\sigma^i=(\sigma^{ik})_{k=1}^{\infty}$ and $\mu=(\mu^k)_{k=1}^\infty $ 
are  $l_2$-valued 
$\mathscr{P} \times Q$-measurable functions on  $\Omega\times[0,T]\times Q$ such that 
$$
\sum_i \sum_k |\sigma^{ik}_t(x)|^2 +\sum_k|\mu^k_t(x)|^2\leq K \quad 
\text{for all $\omega$, $t$ and $x$},
$$
ii)$f^l$, for $l \in \{0,...,d\}$,  and $g=(g^k)_{k=1}^\infty$ are $\mathscr{P} \times \mathscr{B}(Q)$-measurable functions on $\Omega \times [0,T] \times Q $ with values in $\bR$ and $l_2$, respectively, 
such that 
$$
\E(\sum_{l=0}^d \|f^l \|^2_2+\||g|_{l_2}\|^2_2) < \infty
$$
iii) $\psi$ is an $\mathscr{F}_0$-measurable random variable in $L_2(Q)$ 
such that $\E |\psi|^2_2< \infty$
\end{assumption}
\begin{assumption}[Parabolicity]                          \label{as: parabolicity}
There exists a constant $\lambda >0$ such that for all $\omega,t,x$ and for all $ \xi =( \xi _1,...\xi _d) \in \R^d$ we have
\[ a^{ij}_t(x)\xi_i \xi_j  -\frac{1}{2}  \sigma^{ik}_t(x)\sigma^{jk}_t(x)\xi_i \xi _j 
\geq \lambda |\xi |^2,\]
\end{assumption}
We will refer to the constants $K,T,\lambda, d$ and $|Q|$, where the latter is the Lebesgue measure of $Q$, as structure constants. 
\begin{definition}
An $L_2-$solution of equation \eqref{SPDE} is understood to be an $L_2(Q)$-valued, $ \mathscr{F}_t-$adapted, strongly continuous process $(u_t)_{t\in [0,T]}$, such that 

i) $u_t \in H^1_0(Q), \ \text{for $dP \times dt$ almost every} \  (\omega,t) \  \in \Omega\times [0,T]$ 

ii) $  \E\int_0^T( |u_t|_2^2+|\nabla u_t|_2^2 )dt< \infty $

iii) for all $\phi\in C_c^\infty(Q)$ we have with probability one
\[(u_t,\phi)=(\psi,\phi)+\int_0^t -(a^{ij}_s\D_iu_s+f^j_s ,\D_j \phi)+(b^i_s\D_iu_s+c_su_s+f^0_s, \phi)ds \]
\[+\int_0^t(M^k_su_s+g^k_s,\phi)dw^k_s, \]
 for all $t \in [0,T]$.
 
\end{definition}
Equation \eqref{SPDE} can be understood as a stochastic evolution equation on the Gel'fand  triple $H^1_0(Q) \hookrightarrow L_2(Q)\hookrightarrow H^{-1}(Q)$.   Assumptions \ref{as:coef and free terms} and \ref{as: parabolicity}   ensure that the standard conditions for solvability of this type of equations (conditions $A_1$-$A_5$ from \cite{KSEE}) are satisfied. Therefore, by Theorems 3.6 and 3.10 from \cite{KSEE}, equation \eqref{SPDE} admits a unique $L_2-$solution $u$, and the following estimate holds
\begin{equation}\label{L2 supremum estimate}
\E\sup_{0\leq t\leq T}|u_t|_2^2\leq N\E(|\psi|_2^2+\sum_{l=0}^d\|f^l\|_2^2+\||g|_{l_2}\|_2^2),
\end{equation}
where $N=N(d,K,\lambda,T)$.

Let
$$\Gamma_d=\left\lbrace(r,q)\in (1,\infty]^2\middle|\frac{1}{r}+\frac{d}{2q}<1\right\rbrace.$$
The following is our main result.
\begin{theorem}                                \label{thm:main theorem}
Suppose that Assumptions \ref{as:coef and free terms} and \ref{as: parabolicity} hold, and let $u$ be the unique $L_2-$ solution of equation \eqref{SPDE}. Then for any $(r, q)\in \Gamma_d$ and $\eta>0$, 
\begin{equation}                                            \label{eq:main estimate}
\E\|u\|_{\infty}^\eta \leq N \E(|\psi|_{\infty}^\eta+\|f^0\|_{r,q}^\eta+\sum_{i=1}^d\|f^i\|_{2r,2q}^\eta+\||g|_{l_2}\|_{2r,2q}^\eta),
\end{equation}     
where $N=N(\eta,r,q,d,K, \lambda,|Q|,T)$.
\end{theorem}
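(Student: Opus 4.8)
The plan is to run a stochastic version of Moser's iteration, producing a sequence of reverse-H\"older inequalities that upgrade the integrability of $u$ from $L_2$ all the way to $L_\infty$ in space and time. The starting point is an energy estimate for powers of $u$. Fixing an even integer $p$ (and approximating the non-smooth map $z\mapsto|z|^p$ by $C^2$ functions, or invoking the It\^o formula for $|u_t|_p^p$ available from the $L_2$-theory), I would apply It\^o's formula to $\int_Q|u_t|^p\,dx$. The drift produced by the second-order operator, after integration by parts, is $-p(p-1)\int_Q|u|^{p-2}a^{ij}\partial_i u\,\partial_j u$, while the It\^o correction from the noise contributes $+\tfrac{p(p-1)}{2}\int_Q|u|^{p-2}\sigma^{ik}\sigma^{jk}\partial_i u\,\partial_j u$ at leading order. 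Here Assumption \ref{as: parabolicity} is used sharply: the two combine into a term bounded above by $-p(p-1)\lambda\int_Q|u|^{p-2}|\nabla u|^2$, and since $|u|^{p-2}|\nabla u|^2=\tfrac{4}{p^2}|\nabla(|u|^{p/2})|^2$, this yields a dissipation $-\tfrac{4(p-1)}{p}\lambda\int_Q|\nabla(|u|^{p/2})|^2$ that stays coercive uniformly for large $p$. This is exactly the point where the classical parabolicity suffices and the smallness hypothesis on $\sigma$ from \cite{DMS} is avoided.

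The remaining contributions are treated as lower-order perturbations. The zero-order coefficients and $f^0$ pair against $|u|^{p-1}$ and are estimated by H\"older, which is why $f^0$ enters only at single integrability $(r,q)$; the terms $\partial_i f^i$ and, through the quadratic variation, $g$, pair against $|u|^{p/2-1}\nabla(|u|^{p/2})$, so Young's inequality splits them into a small multiple of the coercive gradient term plus integrals of the form $\int|f^i|^2|u|^{p-2}$ and $\int|g|^2_{l_2}|u|^{p-2}$, and this quadratic appearance is the source of the halved exponents $(2r,2q)$ in \eqref{eq:main estimate}. The stochastic integral is controlled by the Burkholder--Davis--Gundy inequality: after Cauchy--Schwarz its bracket is dominated by $\sup_t|u_t|_p^p$ times a gradient integral, and Young's inequality lets me absorb a fraction of $\E\sup_t|u_t|_p^p$ to the left and the gradient part into the dissipation. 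The outcome is a stochastic energy inequality controlling $\E\sup_t|u_t|_p^p+\E\int_0^T|\nabla(|u|^{p/2})|_2^2\,dt$ by the data and by source-times-$|u|^{p-2}$ integrals, with constants polynomial in $p$.

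To convert this into a gain of integrability I would invoke the parabolic (Ladyzhenskaya--Gagliardo--Nirenberg) embedding $L_\infty([0,T];L_2(Q))\cap L_2([0,T];H^1_0(Q))\hookrightarrow L_{2\kappa}([0,T]\times Q)$ with $\kappa=(d+2)/d>1$, applied to $v=|u|^{p/2}$. Pathwise this bounds $\|u\|_{p\kappa}^{p\kappa}$ by $(\sup_t|u_t|_p^p)^{\kappa-1}\int_0^T|\nabla(|u|^{p/2})|_2^2\,dt$, so together with the energy inequality it produces a recursion of the schematic form $\E\|u\|_{p\kappa}^{p\kappa}\le(Np^{a})\big(\E\|u\|_p^p+\Phi_p\big)$, where $\Phi_p$ collects the data norms and the source-times-$|u|^{p-2}$ integrals. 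It is precisely here that the condition $(r,q)\in\Gamma_d$, i.e. $1/r+d/(2q)<1$, enters: H\"older's inequality in space-time bounds $\int|f^i|^2|u|^{p-2}$ by $\||f^i|^2\|_{r,q}$ times a space-time norm of $(|u|^{p/2})^{2-4/p}$, which is in turn controlled by the embedding norm of $|u|^{p/2}$ exactly when the exponents are subcritical in the sense of $\Gamma_d$; this keeps the source contributions strictly lower order than the gain $\kappa$ and lets them be carried through the iteration. Running the recursion along $p_n=2\kappa^n\to\infty$ with base case \eqref{L2 supremum estimate}, the accumulated constant is roughly $\prod_n(Np_n^a)^{1/p_n}$, which converges because $\sum_n p_n^{-1}\log(Np_n^a)<\infty$, and the data norms converge to the $L_\infty$, $(r,q)$ and $(2r,2q)$ norms appearing on the right of \eqref{eq:main estimate}.

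The \emph{main obstacle} is the genuinely stochastic bookkeeping of the iteration together with the arbitrary moment $\eta$. Unlike the deterministic case, the martingale term forces every energy estimate to be taken under $\E$, and the product $(\sup_t|u_t|_p^p)^{\kappa-1}\int_0^T|\nabla(|u|^{p/2})|^2$ delivered by the embedding does not factor under the expectation; one must therefore iterate a single carefully chosen functional of $u$ (for instance $\E$ of a suitable power of the energy of $|u|^{p/2}$), using H\"older in $\Omega$ to decouple the two factors while keeping the exponents compatible across all steps. I expect the cleanest route to first establish \eqref{eq:main estimate} for one convenient value of $\eta$ determined by the starting integrability of the iteration, and then to recover all $\eta>0$ by a normalization argument exploiting the linearity, hence homogeneity, of \eqref{SPDE} in $(\psi,f,g)$, together with the a priori bound \eqref{L2 supremum estimate}. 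Verifying that the constants remain finite through this stochastic iteration, and that the $\Omega$-integrability chosen at the start is preserved as $p_n\to\infty$, is the delicate technical heart of the argument.
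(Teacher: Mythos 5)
Your deterministic skeleton is exactly the paper's: It\^o's formula for $|u_t|_p^p$, the cancellation $a^{ij}-\tfrac12\sigma^{ik}\sigma^{jk}$ giving coercivity uniformly in $p$, Young/H\"older placing $f^0$ at exponents $(r,q)$ and $f^i,g$ at $(2r,2q)$, the Ladyzhenskaya embedding applied to $|u|^{p/2}$, and an iteration $p_n=2\gamma^n$ with multiplicatively summable constants (your account of where $(r,q)\in\Gamma_d$ enters is also correct: it is what makes the dual exponents $(r',q')$ strictly subcritical, i.e.\ $\gamma>1$). But the part you flag as ``the delicate technical heart'' is a genuine gap, and neither of your two proposed fixes works. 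First, the normalization/homogeneity argument cannot recover other values of $\eta$: for each fixed $\eta$ the estimate \eqref{eq:main estimate} is homogeneous of degree $\eta$ on \emph{both} sides under scaling of $(\psi,f,g)$, so scaling yields no relation between different $\eta$'s; and lowering the moment by Jensen fails because the right-hand side is random (for $\eta<\eta_0$ one gets $\E\|u\|_\infty^\eta\le(N\E D^{\eta_0})^{\eta/\eta_0}$, and $(\E D^{\eta_0})^{\eta/\eta_0}\ge\E D^\eta$ is the wrong direction; conditional Jensen would rescue this only if the data norm were $\mathscr{F}_0$-measurable, which $\|f\|_{r,q}$, $\|g\|_{2r,2q}$ are not). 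Second, H\"older in $\Omega$ to decouple $(\sup_t|u_t|_p^p)^{\kappa-1}\int|\nabla(|u|^{p/2})|^2$ requires moments of the energy strictly higher than the first, which the It\^o/BDG estimate does not provide; producing them costs higher moments of the data, and this cost compounds at every step, so the $\Omega$-exponents cannot be kept ``compatible across all steps'' — pushed to the limit this route degenerates into an $L_\infty(\Omega)$ statement rather than the $\eta$-th moment bound of the theorem.

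The missing idea is a Lenglart-type domination (Proposition \ref{Revuz-Yor}, Revuz--Yor IV.4.7). The paper proves each energy estimate not as a plain expectation bound but in the localized form $\E\, I_B X_\tau\le\E\, I_B A_\tau$ for \emph{every} bounded stopping time $\tau$ and every $B\in\mathscr{F}_0$, where $X_t$ is (the max of $|\psi|_\infty^p$ with) the energy of $|u|^{p/2}$ and $A_t$ is a continuous, increasing, \emph{adapted} process built from $|\psi|_\infty^p$, $p^\kappa\|\mathbf{1}_{[0,t]}u\|^p_{r'p/2,q'p/2}$ and the data functional $\mathcal{M}_{r,q,p}(t)$; this is exactly what substitutes for $\mathscr{F}_0$-measurability of the dominating quantity. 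The proposition then yields $\E\sup_t X_t^{\eta/p}\le N(p,\eta)\,\E A_T^{\eta/p}$ for the \emph{fixed} target moment $\eta<p$, so every step of the iteration is an inequality between $\eta$-th moments ($\E\|u\|^\eta_{r'p_{n+1}/2,q'p_{n+1}/2}\le c_n\E[\cdots^\eta]$, inequality \eqref{iteratable}), the embedding being applied pathwise before taking expectations so that no product ever needs to be decoupled under $\E$. Two further ingredients you would also need: a base step (the paper's Lemma \ref{lem: estimate for low f}) — your suggestion to start from \eqref{L2 supremum estimate} only matches the moment $\eta=2$, whereas the paper's base step again uses the domination lemma and produces a term $\epsilon\E\|u\|_\infty^\eta$ that must be absorbed, which in turn forces a preliminary reduction to bounded $\psi,f,g$ (so the absorbed quantity is finite) and a closing truncation/Fatou argument for general data.
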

\begin{remark}
Notice that in particular we obtain
\begin{equation}                                            \label{eq: estimate infty}
\E\|u\|_{\infty}^2 \leq N \E(|\psi|_{\infty}^2+\sum_{l=0}^d\|f^l\|_\infty^2+\||g|_{l_2}\|_\infty^2),
\end{equation} 
and by interpolating between \eqref{L2 supremum estimate} and \eqref{eq: estimate infty}, for any $p\geq2$, one obtains
$$\E\sup_{0\leq t\leq T}|u_t|_p^2\leq N\E(|\psi|_p^2
+\sum_{l=0}^d\|f^l\|_p^2+\||g|_{l_2}\|_p^2)
$$
where $N$ can be chosen to be independent of $p$. In fact, such a uniform estimate for the $L_p$-norms of the solutions is equivalent to \eqref{eq: estimate infty}.
\end{remark}
Theorem \ref{thm:main theorem} will be proved in Section \ref{section-proof}. We will adapt the technique of Moser from \cite{MOS} and  \cite{MOS1}.  The strategy, in short, and for the moment ignoring the contributions from the initial and free data, is the following: with a suitable intermediate norm $[u]_n$ we obtain estimates of the form
$\E\|u\|_{r_{n+1},q_{n+1}}^\eta \leq N(n)\E[u]_n^{\eta}$, $\E[u]_n^{\eta}\leq N(n) \E\|u\|^\eta_{r_n,q_n}$, with $r_n,q_n\nearrow\infty$. The constants $N(n)$ in these estimates are controlled so that one can iterate this procedure, take limits, and finally obtain estimates for the supremum norm.

\section{Preliminaries}\label{section-preliminaries}

In this section we gather some results  that we will need for the proof of Theorem \ref{thm:main theorem}. First let us invoke (II.3.4) from \cite{LA}.
\begin{lemma}\label{embedding}
Suppose that $v\in L_2([0,T],H^1_0(Q))\cap L_{\infty}([0,T], L_2(Q))$. Let $r,q\in(2,\infty)$, satisfying $1/r+d/2q=d/4$. Then $v$ belongs to $ L_{r}([0,T],L_q(Q))$, and
$$
\left(\int_0^T\left(\int_Q|v_t|^{q}dx\right)^{r/q}dt\right)^{2/r}\leq N\left(\sup_{0\leq t\leq T}\int_Q|v_t|^2dx+\int_0^T\int_Q |\nabla v_t|^2dxdt\right)
$$
with $N=N(d,|Q|,T)$.
\end{lemma}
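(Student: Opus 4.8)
The statement is a parabolic Sobolev (Ladyzhenskaya-type) embedding, and the plan is to derive it from the static Sobolev inequality on $Q$ by interpolation in space followed by integration in time. First I would fix $t$ and work with the slice $v_t\in H^1_0(Q)$. For $d\ge 3$ the Sobolev embedding gives $|v_t|_{q_*}\le N|\nabla v_t|_2$ with the critical exponent $q_*=2d/(d-2)$ and $N=N(d)$; for $d=2$ one uses $|v_t|_{q_*}\le N|\nabla v_t|_2$ for any finite $q_*$ with $N=N(d,|Q|)$, and for $d=1$ the embedding $H^1_0(Q)\hookrightarrow L_\infty(Q)$ plays the role of $q_*=\infty$.

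Next, for the target exponent $q\in(2,q_*)$ I interpolate $L_q$ between $L_2$ and $L_{q_*}$, obtaining $|v_t|_q\le |v_t|_2^{1-\theta}|v_t|_{q_*}^{\theta}$, where $\theta\in(0,1)$ solves $1/q=(1-\theta)/2+\theta/q_*$. Combining with the Sobolev bound yields the pointwise-in-time inequality $|v_t|_q\le N|v_t|_2^{1-\theta}|\nabla v_t|_2^{\theta}$. A short computation using $q_*=2d/(d-2)$ shows $\theta=d(1/2-1/q)=d/2-d/q$. The key bookkeeping step is then to observe that the hypothesis $1/r+d/(2q)=d/4$ is precisely equivalent to $\theta=2/r$, i.e. $\theta r=2$; since $r\in(2,\infty)$ this also confirms $\theta\in(0,1)$ and hence $q\in(2,q_*)$, so the interpolation is legitimate.

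Raising the pointwise inequality to the power $r$ and integrating in time, I pull the $L_2$-factor out through the essential supremum and use $\theta r=2$ to leave the gradient term with exponent exactly $2$:
\[
\int_0^T |v_t|_q^r\,dt \le N\Big(\sup_{0\le t\le T}|v_t|_2^2\Big)^{(1-\theta)r/2}\int_0^T|\nabla v_t|_2^2\,dt.
\]
Taking the $2/r$ power, and recalling $2/r=\theta$, gives
\[
\|v\|_{r,q}^2 \le N\Big(\sup_{0\le t\le T}|v_t|_2^2\Big)^{1-\theta}\Big(\int_0^T|\nabla v_t|_2^2\,dt\Big)^{\theta}.
\]
Because $\theta\in(0,1)$, Young's inequality $a^{1-\theta}b^{\theta}\le a+b$ closes the estimate to the sum on the right-hand side of the claim.

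The only genuinely delicate points are the exponent arithmetic, namely confirming that $\theta=2/r$ together with the interpolation identity reproduces exactly the scaling relation $1/r+d/(2q)=d/4$, and tracking the dependence of the Sobolev constant so that $N$ depends only on $d$, $|Q|$, and $T$. In the clean critical case $d\ge3$ one in fact gets $N=N(d)$, while the subcritical and low-dimensional embeddings $d=1,2$, where $q_*$ degenerates, introduce the $|Q|$ dependence; these cases require the separate embeddings noted above but are otherwise identical.
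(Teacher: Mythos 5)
The first thing to note is that the paper does not prove this lemma at all: it is quoted verbatim from Lady\v{z}enskaja--Solonnikov--Ural'ceva \cite{LA}, inequality (II.3.4). So you are supplying a proof where the paper supplies only a citation. Your argument in the case $d\ge 3$ is correct and complete, and it is in fact the standard proof of the cited inequality: the critical Sobolev bound $|v_t|_{q_*}\le N(d)|\nabla v_t|_2$ for $H^1_0$ functions with $q_*=2d/(d-2)$, H\"older interpolation giving $\theta=d(1/2-1/q)$, the identity $\theta=2/r$ (equivalent to the hypothesis $1/r+d/(2q)=d/4$), integration in time, and Young's inequality. Your exponent arithmetic there checks out, and the constant is even uniform in $(r,q)$, as you say.

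However, your dismissal of $d=1,2$ as ``otherwise identical'' is not correct, and the failure occurs exactly at what you yourself identify as the key bookkeeping step. For $d=2$ you propose the endpoint bound $|v_t|_{q_*}\le N(d,|Q|)|\nabla v_t|_2$ for a finite $q_*>q$. Interpolating $L_q$ between $L_2$ and $L_{q_*}$ then gives $\theta=(1/2-1/q)/(1/2-1/q_*)$, while the hypothesis $1/r+1/q=1/2$ gives $2/r=2(1/2-1/q)$; these agree only if $1/2-1/q_*=1/2$, i.e.\ $q_*=\infty$, which is precisely the excluded case in two dimensions. Since $q_*<\infty$ forces $\theta>2/r$, after raising to the $r$-th power the gradient carries exponent $\theta r>2$, and $\int_0^T|\nabla v_t|_2^{\theta r}\,dt$ is not controlled by $\sup_t|v_t|_2^2+\int_0^T|\nabla v_t|_2^2\,dt$, so the time-integration step collapses. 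The same mismatch occurs for $d=1$ with $q_*=\infty$ and $|v_t|_\infty\le N|\nabla v_t|_2$: there $\theta=1-2/q$ while $2/r=1/2-1/q$. The repair is that in low dimensions the endpoint estimate must itself retain an $L_2$ factor: for $d=2$ use the Gagliardo--Nirenberg (Ladyzhenskaya) inequality $|v|_q\le C_q|v|_2^{2/q}|\nabla v|_2^{1-2/q}$ directly, and for $d=1$ use Agmon's inequality $|v|_\infty^2\le 2|v|_2|\nabla v|_2$ before interpolating; in both cases one recovers $|v|_q\le C|v|_2^{1-\theta}|\nabla v|_2^{\theta}$ with $\theta=d(1/2-1/q)=2/r$, and the rest of your argument then goes through unchanged. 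These low-dimensional inequalities are not consequences of ``Sobolev embedding plus H\"older''; they need their own (standard) proofs. Note also that in $d=2$ the constant $C_q$ necessarily grows with $q$ (otherwise one would deduce $H^1_0\hookrightarrow L_\infty$ in the plane), so the lemma's claim $N=N(d,|Q|,T)$ is itself slightly loose for $d\le 2$ --- an imprecision inherited from the citation, and harmless in the paper since the main proof fixes $(r,q)$ and allows its constants to depend on them.
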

The right hand side of the inequality in the above lemma plays the role of the ``suitable norm" (for $n=2$), which was discussed  at the end of the previous section.
We are also going to use the following result (see Proposition IV.4.7 and Exercise IV.4.31/1, \cite{MY}).
\begin{proposition}\label{Revuz-Yor}
Let $X$ be a non-negative, adapted, right-continuous process, and let $A$ be a non-decreasing, continuous process such that
$$\E (X_{\tau}| \mathscr{F}_0 )\leq \E (A_{\tau}| \mathscr{F}_0)$$
for any bounded stopping time $\tau$. Then for any $\sigma\in(0,1)$
$$\E \sup_{t\leq T}X_t^{\sigma}\leq \sigma^{-\sigma}(1-\sigma)^{-1}\E A_T^{\sigma}.$$
\end{proposition}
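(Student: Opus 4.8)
The plan is to prove this Lenglart-type domination inequality in two stages: first a conditional weak-type (distributional) estimate comparing the tail of $\sup_{t\le T}X_t$ with that of $A_T$, and then an integration in the threshold parameter (the layer-cake formula) with an optimally coupled auxiliary level, which is what produces the sharp constant $\sigma^{-\sigma}(1-\sigma)^{-1}$.

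For the weak-type estimate, fix $\lambda>0$. Since the filtration is right-continuous and $X$ is right-continuous and adapted, $R=\inf\{t\ge0:X_t>\lambda\}\wedge T$ is a bounded stopping time, and right-continuity of $X$ gives $X_R\ge\lambda$ on the event $\{\sup_{t\le T}X_t>\lambda\}$. Combining $X\ge0$ with the conditional Chebyshev inequality and the domination hypothesis applied at the bounded stopping time $R$ yields
\[\lambda\, P\Big(\sup_{t\le T}X_t>\lambda\,\Big|\,\mathscr{F}_0\Big)\le \E(X_R\,|\,\mathscr{F}_0)\le \E(A_R\,|\,\mathscr{F}_0)\le \E(A_T\,|\,\mathscr{F}_0),\]
using that $A$ is non-decreasing. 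To obtain the sharp bound I refine this: for a second level $\eta>0$ I introduce $S=\inf\{t\ge0:A_t\ge\eta\}$ and run the previous argument only up to the bounded stopping time $S\wedge T$. Here the continuity of $A$ is essential, as it guarantees $A_{S\wedge T}\le\eta$ (on $\{A_0<\eta\}$; the complementary $\mathscr{F}_0$-set is trivial since there the right-hand side below is $\ge1$), hence $A_{S\wedge T}\le A_T\wedge\eta$. Since $\{\sup_{t\le T}X_t>\lambda\}\subseteq\{\sup_{t\le S\wedge T}X_t>\lambda\}\cup\{S\le T\}$ and $\{S\le T\}\subseteq\{A_T\ge\eta\}$, I arrive at the refined conditional weak-type inequality
\[P\Big(\sup_{t\le T}X_t>\lambda\,\Big|\,\mathscr{F}_0\Big)\le \frac1\lambda\,\E(A_T\wedge\eta\,|\,\mathscr{F}_0)+P(A_T\ge\eta\,|\,\mathscr{F}_0).\]

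For the second stage, write $Y=\sup_{t\le T}X_t$ and apply the conditional layer-cake formula $\E(Y^\sigma\,|\,\mathscr{F}_0)=\sigma\int_0^\infty\lambda^{\sigma-1}P(Y>\lambda\,|\,\mathscr{F}_0)\,d\lambda$. The decisive idea is to couple the two levels by setting $\eta=c\lambda$ for a constant $c>0$, insert the weak-type bound, and integrate in $\lambda$ by Tonelli's theorem (all integrands are non-negative). Each of the two resulting integrals reduces, after an elementary computation, to a multiple of $\E(A_T^\sigma\,|\,\mathscr{F}_0)$, giving $\E(Y^\sigma\,|\,\mathscr{F}_0)\le\big(\tfrac{c^{1-\sigma}}{1-\sigma}+c^{-\sigma}\big)\E(A_T^\sigma\,|\,\mathscr{F}_0)$; minimizing the prefactor over $c$ gives $c=\sigma$ and the value $\sigma^{-\sigma}(1-\sigma)^{-1}$, and taking expectations yields the claim. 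I expect the main obstacle to be the weak-type step: isolating the truncated term $A_T\wedge\eta$ rather than the cruder $\E(A_T\,|\,\mathscr{F}_0)$ (which would only produce a bound in terms of $\E(A_T\,|\,\mathscr{F}_0)^\sigma$, i.e. the wrong, larger, right-hand side) requires the auxiliary stopping time built from $A$ and genuinely uses its continuity; the subsequent optimization in $c$ is then what pins down the exact constant.
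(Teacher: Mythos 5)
Your proof is correct, but note that the paper does not actually prove this proposition at all: it is quoted verbatim from the literature (Proposition IV.4.7 together with Exercise IV.4.31/1 in Revuz--Yor, i.e.\ Lenglart's domination inequality for a continuous dominating process), so there is no in-paper argument to compare against. What you have written is a sound, self-contained reconstruction of the classical Lenglart argument behind that citation: the two-level weak-type estimate (stopping $X$ at its first passage above $\lambda$, but only up to the hitting time $S$ of level $\eta$ by $A$, with continuity of $A$ giving $A_{S\wedge T}\le A_T\wedge\eta$ and hence the truncated term), followed by the conditional layer-cake formula with the coupling $\eta=c\lambda$ and optimization at $c=\sigma$, which is exactly what produces the sharp constant $\sigma^{-\sigma}(1-\sigma)^{-1}$ rather than the cruder $(2-\sigma)/(1-\sigma)$ available for general increasing $A$. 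Your handling of the edge cases is also right: the $\mathscr{F}_0$-measurable event $\{A_0\ge\eta\}$ is disposed of because there $P(A_T\ge\eta\,|\,\mathscr{F}_0)=1$, and the whole argument is run conditionally on $\mathscr{F}_0$ (as it must be, since the hypothesis is only a conditional domination), with the unconditional claim obtained by taking expectations at the end. The one point worth making explicit if this were written out in full is the measure-theoretic step of integrating the a.s.\ inequality over $\lambda$: the exceptional null set depends on $\lambda$, so one should invoke Fubini/Tonelli on $\Omega\times(0,\infty)$ (or monotonicity in $\lambda$ of each term) to justify the layer-cake insertion; your appeal to Tonelli covers this, but it is the only step stated without detail.
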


In order to obtain our estimates, we will need and It\^o formula for $|u_t|_p^p$. The difference between the next lemma and Lemma 8 in \cite{DMS}, is that we obtain supremum (in time) estimates, that are essential  for having \eqref{eq:Ito formula for |u|^p} almost surely, for all $t \in [0,T]$. Therefore, we give a whole proof for the sake of completeness.
\begin{lemma}                                    \label{lem: moment bounds for bounded f}

Suppose that $u$ satisfies equation  \eqref{SPDE},  $f^l \in \mathbb{L}_p$, for $l\in \{0,...,d\}$,  $\  g \in \mathbb{L}_p(l_2)$, and $\psi \in \mathbf{L}_p$ for some $p\geq2$. Then there exists a constant $N=N(d,K,\lambda,p)$, such that

\begin{equation}                                 \label{eq: finiteness of |u|^p}
\E \sup_{t \leq T} |u_t|^p_p + \E\int_0^T \int_Q |\nabla u_s|^2|u_s|^{p-2} dxds \leq N \E(|\psi|_p^p+\sum_{l=0}^d \|f^l\|_p^p+\| |g|_{l_2}\|^p_p).
\end{equation}

Moreover, almost surely

\begin{align}
\int_Q |u_t|^pdx &=\int_Q |u_0|^pdx + p\int_0^t\int_Q (\sigma^{ik}_s \D_i u_s +\mu^ku_s+g^k) u_s|u_s|^{p-2}   dx dw^k_s \nonumber\\
&+ \int_0^t \int_Q -p(p-1)a^{ij}_s\D_iu_s |u_s|^{p-2}\D_j u_s -p(p-1)f^i_s\D_iu_s  |u_s|^{p-2} dxds\nonumber \\
&+\int_0^t \int_Q p(b^i_s\D_i u_s + c_su_s  +f^0_s ) u_s|u_s|^{p-2}dxds \nonumber\\
&+\frac{1}{2}p(p-1)\int_0^t\int_Q \sum_{k=1}^\infty |\sigma^{ik}_s \D_iu_s+ \mu^ku_s+g^k_s|^2 |u_s|^{p-2}dxds, \label{eq:Ito formula for |u|^p}           
\end{align}

for any $t \leq T$. 

\end{lemma}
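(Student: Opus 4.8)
The plan is to obtain \eqref{eq:Ito formula for |u|^p} by spatial mollification and then to read off \eqref{eq: finiteness of |u|^p} from it. Since $p\geq 2$, the function $r\mapsto |r|^p$ is $C^2$ on $\bR$, with derivatives $p|r|^{p-2}r$ and $p(p-1)|r|^{p-2}$. First I would extend $u_t$ by zero outside $Q$ (legitimate since $u_t\in H^1_0(Q)$), convolve with a spatial mollifier $\rho_\epsilon$ to obtain $u^\epsilon_t(x)$, which for each fixed $x$ is a genuine real-valued It\^o process whose coefficients are the $\epsilon$-mollifications of the drift and diffusion in \eqref{SPDE}. Applying the classical one-dimensional It\^o formula to $|u^\epsilon_t(x)|^p$, integrating over $Q$ and using a stochastic Fubini theorem for the martingale part, one arrives at a mollified version of \eqref{eq:Ito formula for |u|^p}. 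To keep every integral finite before passing to the limit, I would first localize by the stopping times $\tau_n=\inf\{t:|u_t|_2>n\}\wedge T$, which keep $\sup_{t\leq\tau_n}|u_t|_2$ bounded, and carry out the whole argument for the stopped process; note that $\tau_n\to T$ almost surely by \eqref{L2 supremum estimate}.

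The term requiring care is the second-order one. Writing $(\partial_j(a^{ij}\partial_i u))^\epsilon=\partial_j\big((a^{ij}\partial_i u)^\epsilon\big)$ and integrating by parts against the smooth function $u^\epsilon|u^\epsilon|^{p-2}$, whose gradient is $(p-1)|u^\epsilon|^{p-2}\partial_j u^\epsilon$, produces the term $-p(p-1)\int_Q (a^{ij}\partial_i u)^\epsilon\,\partial_j u^\epsilon\,|u^\epsilon|^{p-2}\,dx$, with no boundary contribution after the zero extension. The main obstacle of the whole lemma lies here: passing to the limit $\epsilon\to0$ in the nonlinear products such as $|u^\epsilon|^{p-2}u^\epsilon$ and $|u^\epsilon|^{p-2}\partial_i u^\epsilon\,\partial_j u^\epsilon$ under only the $H^1_0$-regularity of $u$. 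I would handle this by extracting a subsequence along which $u^\epsilon\to u$ and $\nabla u^\epsilon\to\nabla u$ almost everywhere, and by establishing uniform integrability of the relevant products from the assumptions $f^l\in\mathbb{L}_p$, $g\in\mathbb{L}_p(l_2)$, $\psi\in\mathbf{L}_p$ together with the $\tau_n$-localization, so that a Vitali/dominated convergence argument identifies the limits and yields \eqref{eq:Ito formula for |u|^p} for the stopped process.

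With the It\^o formula in hand, I would derive the estimate by taking expectations, the stopped stochastic integral being a martingale of mean zero. The decisive point is that the genuinely second-order contributions combine as
\[-p(p-1)\int_Q\Big(a^{ij}_s-\tfrac12\sigma^{ik}_s\sigma^{jk}_s\Big)\partial_i u_s\,\partial_j u_s\,|u_s|^{p-2}\,dx,\]
so that Assumption \ref{as: parabolicity} bounds it above by $-\lambda p(p-1)\int_Q|\nabla u_s|^2|u_s|^{p-2}\,dx$. This is the coercive term controlling the second quantity on the left of \eqref{eq: finiteness of |u|^p}. All remaining first-order and cross terms ($b^i\partial_i u\,u$, $\sigma^{ik}\partial_i u\,\mu^k u$, $\sigma^{ik}\partial_i u\,g^k$, $f^i\partial_i u$, and so on) carry one factor $|u|^{p-2}$ and at most one gradient, so Young's inequality splits each into a small multiple of $|\nabla u|^2|u|^{p-2}$, absorbed by the coercive term, plus contributions of the form $|u|^p$ and the free data $|f^l|^p$, $|g|_{l_2}^p$. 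A Gronwall argument in $t$ then removes the $\int_Q|u|^p$ term.

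Finally, the supremum on the left of \eqref{eq: finiteness of |u|^p} is obtained by applying the Burkholder--Davis--Gundy inequality to the martingale term of \eqref{eq:Ito formula for |u|^p}: its quadratic variation is controlled by $\int_0^T\sum_k\big(\int_Q|\sigma^{ik}\partial_i u+\mu^k u+g^k|\,|u|^{p-1}\,dx\big)^2ds$, which after Young's inequality is dominated by $(\sup_{t\leq T}|u_t|_p^p)^{1/2}$ times the square root of the already-controlled quantities, letting one absorb $\tfrac12\E\sup_{t\leq T}|u_t|_p^p$ into the left-hand side. Passing $n\to\infty$ by monotone convergence removes the localization and gives \eqref{eq: finiteness of |u|^p}; since both sides of \eqref{eq:Ito formula for |u|^p} are now seen to be almost surely continuous in $t$ (the stochastic integral being a continuous $L_p$-martingale by this bound), the identity holds almost surely simultaneously for all $t\in[0,T]$, as claimed.
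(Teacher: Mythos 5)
There is a genuine gap here, and it comes from the decision to prove \eqref{eq:Ito formula for |u|^p} by spatial mollification of the solution. First, the mollification argument breaks down near the boundary of the bounded domain $Q$. The weak formulation of \eqref{SPDE} gives the dynamics of $(u_t,\phi)$ only for $\phi\in C_c^\infty(Q)$, so $u^\epsilon_t(x)=(u_t,\rho_\epsilon(x-\cdot))$ is an It\^o process with identifiable coefficients only when $\rho_\epsilon(x-\cdot)$ is supported in $Q$, i.e.\ for $x$ at distance greater than $\epsilon$ from $\partial Q$; in the $\epsilon$-layer around the boundary you have no equation for $u^\epsilon_t(x)$ at all. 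Relatedly, the claimed identity $(\D_j(a^{ij}\D_i u))^\epsilon=\D_j\big((a^{ij}\D_iu)^\epsilon\big)$ ``with no boundary contribution'' is false for the zero extension: $u_t\in H^1_0(Q)$ guarantees that the extension of $u_t$ lies in $H^1(\bR^d)$, but the vector field $a^{ij}\D_iu_t$ has no reason to have vanishing normal trace, so its zero extension has a distributional divergence containing a surface term on $\partial Q$, which your computation silently discards. This is precisely why whole-space mollification proofs of It\^o formulas do not transfer to bounded domains. Second, the passage $\epsilon\to0$ is circular as set up: your stopping times $\tau_n$ control only $\sup_t|u_t|_2$, which gives no bound, uniform in $\epsilon$, on quantities like $\int_0^{\tau_n}\int_Q|u^\epsilon|^{p-2}|\nabla u^\epsilon|^2\,dx\,dt$ (the pointwise bound $|u^\epsilon|\leq N(\epsilon)|u|_2$ degenerates as $\epsilon\to0$). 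Since the finiteness of $\E\int_0^T\int_Q|u|^{p-2}|\nabla u|^2\,dx\,dt$ is itself part of the conclusion \eqref{eq: finiteness of |u|^p}, asserting ``uniform integrability of the relevant products'' from the hypotheses plus the $\tau_n$-localization assumes essentially what is to be proved.

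The paper avoids both problems by truncating the function rather than mollifying the solution: $|r|^p$ is replaced by $C^2$ functions $\phi_n$ agreeing with $|r|^p$ for $|r|\leq n$ and extended quadratically, so that $\phi_n''$ is bounded and $|\phi_n(r)|\leq N|r|^2$, $|\phi_n'(r)|\leq N|r|$. For such functions, an It\^o formula for $\int_Q\phi_n(u_t)\,dx$ valid directly in the variational framework $H^1_0(Q)\hookrightarrow L_2(Q)\hookrightarrow H^{-1}(Q)$ is available (Section 3 of \cite{KIto}), with every integral automatically finite for an $L_2$-solution and no boundary issue arising. Estimates uniform in $n$ then follow from the parabolicity condition, Young's inequality, Gronwall's lemma and the Davis inequality --- this part of your argument (the coercivity combination $a^{ij}-\tfrac12\sigma^{ik}\sigma^{jk}$, absorbing half of the supremum via the martingale estimate) is sound and matches the paper. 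One concludes by Fatou's lemma for \eqref{eq: finiteness of |u|^p}, and then by dominated convergence for \eqref{eq:Ito formula for |u|^p}, the domination now being legitimately supplied by the just-proved bound \eqref{eq: finiteness of |u|^p} together with $\phi_n(r)\leq N|r|^p$, $\phi_n'(r)\leq N|r|^{p-1}$, $\phi_n''(r)\leq N|r|^{p-2}$. If you wish to salvage a mollification-style proof, you would need an interior approximation with cutoffs and a careful treatment of the boundary layer, or to work with a truncation of the nonlinearity as the paper does; extension by zero alone does not suffice.
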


\begin{proof}

Consider the functions 

\begin{equation}
 \phi_n (r)= \left\{
\begin{array}{rl}
|r|^p \ \ \ \ \ \ \ \ \ \ \ \ \ \  \ \ \qquad \qquad \qquad\qquad \qquad  & \text{if } |r| < n \\
n^{p-2}\frac{p(p-1)}{2}(|r|-n)^2+pn^{p-1}(|r|-n)+n^p & \text{if } |r| \geq n. 
\end{array} \right. \nonumber
\end{equation}
Then one can see that $\phi_n$  are twice continuously differentiable, and satisfy

$$
|\phi_n(x)| \leq N|x|^2, \ |\phi_n'(x)| \leq N|x|, \ |\phi''_n(x)| \leq N,
$$
where $N$ depends only on $p$ and $n\in \mathbb{N}$. We also have that for any $r  \in \bR$, $\phi_n(r)\to |r|^p, \ \phi_n'(r) \to p|r|^{p-2}r$, $ \phi_n''(r)\to p(p-1)|r|^{p-2}$, as $n \to \infty$, and 

\begin{equation}                                   \label{eq:dom of phi n}
\phi_n(r)\leq N |r|^p, \ \phi_n'(r) \leq N |r|^{p-1}, \  \phi_n''(r)\leq N |r|^{p-2}, 
\end{equation}
 where $N$ depends only on $p$. Then for each $n \in \mathbb{N}$ we have  almost surely

\begin{align}
\int_Q \phi_n(u_{t})dx &=\int_Q \phi_n(u_0) dx+ \int_0^{t}\int_Q (\sigma^{ik}_s \D_i u_s +\mu^ku_s+g^k) \phi_n'(u_s)   dxdw^k_s\nonumber\\
&+ \int_0^{t} \int_Q -a^{ij}_s\D_iu_s \phi''_n(u_s)\D_j u_s -f^i\phi''_n(u_s)\D_i u_s dxds \nonumber \\
&+\int_0^{ t}\int_Q  b^i_s\D_i u_s \phi'_n(u_s) + c_su_s \phi_n'(u_s) +f^0_s \phi_n'(u_s) dxds \nonumber\\                     
&+\frac{1}{2}\int_0^{t}\int_Q \sum_{k=1}^\infty |\sigma^{ik}_s \D_iu_s+ \mu^ku_s+g^k_s|^2\phi_n''(u_s) dxds,\label{eq: ito before the limit}
\end{align}
for any  $t \in [0,T]$ (see for example, Section 3 in \cite{KIto}). 
By Young's inequality, and the parabolicity condition we have
for any $\varepsilon>0$, 
\begin{align}
\int_Q \phi_n(u_{t})dx  &\leq m^{(n)}_{ t}+\int_Q \phi_n(u_0)dx \nonumber \\
&+\int_0^{t} \int_Q (-\lambda |\nabla u_s|^2+ \varepsilon |\nabla u_s|^2+ N\sum_{i=1}^d|f^i_s|^2 )\phi''_n(u_s) dx ds \nonumber \\
 &+ \int_0^{t}\int_Q(\epsilon |\nabla u_s|^2+ N|u_s|^2+N \sum_{k=1}^\infty |g^k_s|^2)\phi_n''(u_s) dxds\nonumber\\                             
 &+\int_0^{ t}\int_Q ( b^i_s\D_i u_s  + c_su_s  +f^0_s)\phi'_n(u_s) dx ds ,\label{eq: estimate 1} 
\end{align}
where $N=N(d,K,\epsilon)$, and $m^{(n)}_t$ is the martingale from \eqref{eq: ito before the limit}.
One can check that the following inequalities hold,
\begin{enumerate}
\item[i)] $|r\phi'_n(r)| \leq p\phi_n(r)$
\item[ii)] $|r^2\phi''(r)| \leq p(p-1) \phi_n(r)$
\item[iii)] $|\phi_n'(r)|^2 \leq 4  p \ \phi_n''(r) \phi_n(r)$  
\item[iv)] $[\phi''_n(r)]^{p/(p-2)} \leq [p(p-1)]^{p/(p-2)} \phi_n(r)$,
\end{enumerate}
which combined with Young's inequality imply,

\begin{enumerate}
\item[i)]$\D_i u_s \phi'_n(u_s) \leq \epsilon\phi''_n(u_s)|\D_iu_s|^2+N\phi_n(u_s)$
\item[ii)] $|u_s \phi_n'(u_s)| \leq p\phi_n(u_s)$
\item[iii)] $|f^0_s\phi_n'(u_s)| \leq |f^0_s||\phi''_n(u_s)|^{1/2}|\phi_n(u_s)|^{1/2}
\leq N |f^0_s|^p+N \phi_n(u_s)
$
\item[iv)]   $|u_s|^2\phi_n''(u_s)\leq N \phi_n(u_s)$
\item[v)] $\sum_k |g_s^k|^2 \phi''_n(u_s) \leq N \phi_n(u_s)+ N \Big(\sum_k|g^k_s|^2\Big)^{p/2}$
\item[vi)]  $\sum_{i=1}^d|f^i_s|^2 \phi''_n(u_s)  \leq N \phi_n(u_s)+ N \sum_{i=1}^d |f^i_s|^p$,
\end{enumerate}
where  $N$ depends only on $p$ and $\epsilon$.

By choosing $\epsilon$ sufficiently small, and  taking expectations we obtain 
$$
\E \int_Q \phi_n(u_{ t})dx  +\E I_A \int_0^{ t} \int_Q |\nabla u_s|^2\phi''_n(u_s)dx ds \leq N\E \mathcal{K}_{t}+N\int_0^t \E\int_Q\phi_n(u_{ s}) dxds,
$$
where $N=N(d,p,K,\lambda)$ and
$$
\mathcal{K}_{t}=|\psi|^p_p+\int_0^t\sum_{l=0}^d |f^l_s|_p^p+|g_s|^p_p ds .
$$
By Gronwall's lemma we get 
$$
\E \int_Q \phi_n(u_{ t}) dx +\E \int_0^{t} \int_Q |\nabla u_s|^2\phi''_n(u_s)dx ds \leq N \E\mathcal{K}_{t}
$$
for any $t \in [0,T]$,  with $N=N(T,d,p,K,\lambda)$.
Going back to \eqref{eq: estimate 1}, using the same estimates, and the above relation, by taking suprema up to $T$ we have
$$
\E\sup_{t\leq T} \int_Q \phi_n(u_{  t})dx \leq N\E  I_A \mathcal{K}_{t} +\E \sup_{t\leq T} |m^{(n)}_t|.
$$
$$
\leq N\E \mathcal{K}_{T}+ N \E \left(\int_0^{T} \sum_k \left( \int_Q|\sigma^{ik} \D_iu_s+\mu^ku_s+g^k_s||\phi''_n(u_s)\phi_n(u_s)|^{1/2} dx\right)^2 ds \right)^{1/2}
$$
$$
\leq N\E \mathcal{K}_{T}+ N\E  \left(\int_0^{T} \int_Q (|\nabla u_s|^2+|u_s|^2+\sum_{k=1}^\infty |g^k_s|^2 )\phi''_n(u_s) dx\int_Q \phi_n(u_s)dx  ds
\right)^{1/2}
$$
$$
\leq N\E  \mathcal{K}_{T}+\frac{1}{2}\E\sup_{t \leq {T}} \int_Q \phi_n(u_t) dx< \infty,
$$
where $N=N(T,d,p,K,\lambda)$. Hence, 
$$
\E \sup_{t \leq T}\int_Q \phi_n(u_{t}) dx +\E  \int_0^{T} \int_Q |\nabla u_s|^2\phi''_n(u_s) dxds \leq N\E  \mathcal{K}_{T},
$$
and by Fatou's lemma we get \eqref{eq: finiteness of |u|^p}. For \eqref{eq:Ito formula for |u|^p}, we go back to \eqref{eq: ito before the limit}, and by letting a subsequence $n(k) \to \infty$ and using the dominated convergence theorem, we see that each term converges to the corresponding one in \eqref{eq:Ito formula for |u|^p} almost surely, for all $t\leq T$. This finishes the proof.

\end{proof}

\begin{corollary}\label{ito-corollary}

 Let $\gamma>1$ and denote $\kappa=4\gamma/(\gamma-1)$. Suppose furthermore that $r,r',q,q'\in(1,\infty)$, satisfying $1/r+2/r'=1$ and $1/q+2/q'=1$. Suppose that $u$ satisfies the conditions of Lemma \ref{lem: moment bounds for bounded f}  for any $p\in\{2\gamma^n,n\in\mathbb{N}\}$. Then, for any  $p\in\{2\gamma^n,n\in\mathbb{N}\}$, almost surely, for all $t\leq T$

$$
\int_Q|u_t|^pdx +\frac{p^2}{4}\int_0^t\int_Q|\nabla u_t|^2|u_t|^{p-2}dxds\leq N' m_t
$$
\begin{equation}\label{eq:corollary}
+ N\left[|\psi|_p^p+p^{\kappa}\|u\|_{r'p/2,q'p/2}^{p}+p^{-p}(\|f^0\|_{r,q}^p+\sum_{i=1}^d\|f^i\|_{2r,2q}^p+\||g|_{l_2}\|_{2r,2q}^p)\right],
\end{equation}
where $m_t$ is the martingale from \eqref{eq:Ito formula for |u|^p}, and $N,N'$ are constants depending only on $K,d,T,\lambda, |Q|,r,q$.

\end{corollary}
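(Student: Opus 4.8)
The plan is to begin from the It\^o formula \eqref{eq:Ito formula for |u|^p} and to leave the stochastic integral untouched, so that it becomes the term $N'm_t$ on the right-hand side; only the finite-variation terms need to be estimated. First I would treat the two second-order contributions together: the elliptic term $-p(p-1)a^{ij}_s\D_iu_s\,\D_ju_s|u_s|^{p-2}$ and the piece $\tfrac12 p(p-1)\sigma^{ik}_s\sigma^{jk}_s\D_iu_s\,\D_ju_s|u_s|^{p-2}$ arising from the quadratic variation of the noise. By Assumption \ref{as: parabolicity} their sum is at most $-p(p-1)\lambda|\nabla u_s|^2|u_s|^{p-2}$. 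The remaining first-order terms containing $\nabla u_s$ — those carrying $f^i_s$ and $b^i_s$, together with the mixed products $\sigma^{ik}_s\mu^k_su_s\D_iu_s$ and $\sigma^{ik}_sg^k_s\D_iu_s$ from expanding the square — I would split by Young's inequality into $\varepsilon|\nabla u_s|^2|u_s|^{p-2}$ plus remainders. Choosing $\varepsilon$ small relative to $\lambda$ and independent of $p$, a coefficient $\tfrac{p^2}{4}$ survives in front of $\int_0^t\int_Q|\nabla u_s|^2|u_s|^{p-2}\,dx\,ds=\int_0^t\int_Q|\nabla(|u_s|^{p/2})|^2\,dx\,ds$, which I keep on the left (discarding the nonnegative remainder of the parabolic term), producing the left-hand side of \eqref{eq:corollary}.

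After this absorption the right-hand side splits into two types of integrals. The \emph{self} terms, of the form $C(p)\int_0^t\int_Q|u_s|^p\,dx\,ds$ with $C(p)$ polynomial in $p$, come from $c_su_s$, from $\sum_k|\mu^k_su_s|^2$, and from the Young remainders of the cross terms. The \emph{data} terms are $p\int_0^t\int_Q f^0_s\,\mathrm{sgn}(u_s)|u_s|^{p-1}\,dx\,ds$ and the quadratic contributions, of size $p(p-1)$, of $(|f^i_s|^2+\sum_k|g^k_s|^2)|u_s|^{p-2}$. Each is estimated by H\"older's inequality on the finite-measure cylinder $[0,T]\times Q$, using that the conjugate exponents of $(r,q)$ are $(r'/2,q'/2)$ under the conventions $1/r+2/r'=1$ and $1/q+2/q'=1$. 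This pairs $f^0_s$ with $|u_s|^{p-1}$ and pairs $|f^i_s|^2,\sum_k|g^k_s|^2\in L_{r,q}$ — i.e. $\|f^i\|^2_{2r,2q}$ and $\||g|_{l_2}\|^2_{2r,2q}$ — with $|u_s|^{p-2}$; since $r',q'>2$ the surviving powers of $|u_s|$ are controlled by $\|u\|_{r'p/2,q'p/2}^{p-1}$ and $\|u\|_{r'p/2,q'p/2}^{p-2}$, the passage from exponents $p-1,p-2$ up to $p$ on a set of finite measure costing only a $p$-independent constant. The self term is likewise bounded by $\|u\|_{r'p/2,q'p/2}^p$ times a constant not depending on $p$, by the H\"older inclusion on $[0,T]\times Q$.

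Finally I would split each of these products by one more Young inequality with a free parameter $\delta$ taken to be a power of $1/p$: exponents $(p,p/(p-1))$ and $\delta=1/p$ for the $f^0$ term, and exponents $(p/2,p/(p-2))$ with $\delta$ of order $p^{-2}$ for the quadratic data terms. These choices are made precisely so that the coefficient landing on the data norm is of order $p^{-p}$, which is the source of the prefactor $p^{-p}$ in \eqref{eq:corollary}. The main obstacle is the uniform-in-$p$ bookkeeping that must accompany this: one has to check that the coefficient simultaneously produced in front of $\|u\|_{r'p/2,q'p/2}^p$ stays below $p^\kappa$. A direct computation gives this coefficient as $p^{\theta(p)}$ with $\theta(p)\le 2+\tfrac{2p}{p-2}$, a quantity that decreases in $p$ and is therefore largest, over the admissible exponents $p\in\{2\gamma^n\}$, at the smallest value $p=2\gamma$, where it equals $(4\gamma-2)/(\gamma-1)<\kappa=4\gamma/(\gamma-1)$ (the case $p=2$ being trivial, as then the data terms carry no power of $u$). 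Since $p\ge 2\gamma>1$ this yields $p^{\theta(p)}\le p^\kappa$, and collecting the self term with the $\|u\|$-parts of the data estimates under this single prefactor $p^\kappa$ gives \eqref{eq:corollary}. This tracking of constants — forcing the data factor to decay like $p^{-p}$ while the self factor remains below $p^\kappa$ — is exactly what pins down the value of $\kappa$ and makes \eqref{eq:corollary} usable in the iteration of Section \ref{section-proof}.
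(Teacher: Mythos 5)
Your proposal is correct and follows essentially the same route as the paper's proof: It\^o's formula from Lemma \ref{lem: moment bounds for bounded f} combined with the parabolicity condition and Young's inequality to absorb the gradient terms, then H\"older's inequality with the conjugate exponents $(r'/2,q'/2)$ on the finite-measure cylinder, then a $p$-weighted Young inequality forcing the coefficient $p^{-p}$ on the data norms while checking that the coefficient on $\|u\|_{r'p/2,q'p/2}^{p}$ stays below $p^{\kappa}$, with the binding constraint coming from the quadratic data terms at the smallest exponent $p=2\gamma$ and the case $p=2$ handled separately --- exactly as in the paper. (One trivial slip, irrelevant to the argument: the parenthetical identity should read $\tfrac{p^2}{4}\int_0^t\int_Q|\nabla u_s|^2|u_s|^{p-2}\,dx\,ds=\int_0^t\int_Q|\nabla(|u_s|^{p/2})|^2\,dx\,ds$, i.e.\ the factor $p^2/4$ belongs inside the equality.)
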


\begin{proof}

By Lemma \ref{lem: moment bounds for bounded f}, the parabolicity condition,  and Young's inequality we have

$$
\int_Q|u_t|^pdx  +\frac{p^2}{4}\int_0^t\int_Q|\nabla u_s|^2|u_s|^{p-2}dxds\leq N'm_t + N_1\left(\int_Q|\psi|^pdx\right.
$$
$$
+\int_0^t\left[\int_Q p^2|u_s|^p\left.+p|f^0_s||u_s|^{p-1}+p^2\sum_{i=1}^d|f^i_s|^2|u_s|^{p-2}+p^2 |g_s|_{l_2}^2|u_s|^{p-2}dx\right]ds\right)
.
$$
Then by H\"older's inequality  we have 
$$
\int_0^t\int_Q|f^0_s||u_s|^{p-1}dxds\leq\|f^0\|_{r,q}\|u\|_{q'(p-1)/2,r'(p-1)/2}^{p-1},
$$
and by Young's inequality we obtain
\begin{align*}
p\|f^0\|_{r,q}\|u\|_{q'(p-1)/2,r'(p-1)/2}^{p-1}&\leq p^{-p}\|f^0\|_{r,q}^p+p^{\kappa}\|u\|_{r'(p-1)/2,q'(p-1)/2}^{p}\\
&\leq p^{-p}\|f^0\|_{r,q}^p+N_2p^{\kappa}\|u\|_{r'p/2,q'p/2}^{p}.
\end{align*}
Similarly, for $n\geq 1$,
\begin{align*}
p^2\int_0^t\int_Q|f^i_s|^2|u_s|^{p-2}dxds &\leq p^2 \|f^i\|_{2r,2q}^2\|u\|_{r'(p-2)/2,q'(p-2)/2}^{p-2}\\
&\leq p^{-p}\|f^i\|_{2r,2q}^p+p^{\kappa}\|u\|_{r'(p-2)/2,q'(p-2)/2}^{p}\\
&\leq p^{-p}\|f^i\|_{2r,2q}^p+N_3p^{\kappa}\|u\|_{r'p/2,q'p/2}^{p}.
\end{align*}
The same holds for $g$ in place of $f^i$. The case $n=0$ can be covered separately with another constant $N_4$, and then $N$ can be chosen to be $\max\{N_1(N_2+N_3), N_4\}$. This finishes the proof.

\end{proof}

\begin{lemma}                 \label{lem: estimate for low f}
Suppose that $u$ satisfies equation  \eqref{SPDE},  $f^l \in \mathbb{L}_p$, for $l\in \{0,...,d\}$,  $\  g \in \mathbb{L}_p(l_2)$, and $\psi \in \mathbf{L}_p$ for some $p\geq2$. 
Then for any $0< \eta < p$, and for any $\epsilon>0$, 
$$                         
\E \left( \sup_{t \leq T} |u_t|^p_p + \frac{p^2}{4} \E\int_0^T \int_Q |\nabla u_s|^2|u_s|^{p-2} dxds\right)^{\eta/p} 
$$
$$
\leq  \epsilon \E\|u\|_{\infty}^\eta +N(\epsilon,p) \E \left[|\psi|_p^\eta +\|f^0\|_{1}^\eta+\sum_{i=1}^d\|f^i\|_{2}^\eta+\||g|_{l_2}\|_{2}^\eta \right]
$$
where $N(\epsilon,p)$  is a constant depending only on $\epsilon, \eta,  K,d,T,\lambda, |Q|$, and $p$.

\end{lemma}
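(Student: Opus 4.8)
The plan is to apply the maximal inequality of Proposition \ref{Revuz-Yor} with exponent $\sigma=\eta/p\in(0,1)$ to the non-negative, adapted, continuous process
$$
X_t:=|u_t|_p^p+\frac{p^2}{4}\int_0^t\int_Q|\nabla u_s|^2|u_s|^{p-2}\,dx\,ds.
$$
Since the second summand is non-decreasing in $t$, one has $\sup_{t\le T}X_t\ge\tfrac12 Y$, where $Y$ denotes the quantity under the expectation on the left-hand side (indeed $\sup_t X_t\ge\sup_t|u_t|_p^p$ and $\sup_t X_t\ge X_T\ge\frac{p^2}{4}\int_0^T\int_Q|\nabla u_s|^2|u_s|^{p-2}$); hence it suffices to bound $\E\sup_{t\le T}X_t^{\eta/p}$.

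First I would produce a comparison process $A$. Evaluating the It\^o formula \eqref{eq:Ito formula for |u|^p} at a bounded stopping time $\tau$, the parabolicity Assumption \ref{as: parabolicity} turns the combination of the $a^{ij}$-drift and the $\sigma^{ik}\sigma^{jk}$-part of the quadratic variation into $-p(p-1)\lambda\int\int|\nabla u|^2|u|^{p-2}$, and Young's inequality absorbs the cross terms $f^i\D_iu$, $\sigma^{ik}\D_iu\,(\mu^ku+g^k)$ and $b^i\D_iu\,u$ into a fraction of this gradient term, leaving $\frac{p^2}{4}\int\int|\nabla u|^2|u|^{p-2}$ on the left (the bookkeeping is identical to that of Corollary \ref{ito-corollary}). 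The remaining zeroth-order contributions I would estimate by pulling out $\|u\|_\infty$: the $f^0$-term by $\|u\|_\infty^{p-1}\|f^0\|_1$ and the $f^i$- and $g$-terms by $\|u\|_\infty^{p-2}\big(\sum_i\|f^i\|_2^2+\||g|_{l_2}\|_2^2\big)$, while the $b,c,\mu$-contributions produce a term $N_0\int_0^\tau|u_s|_p^p\,ds\le N_0\int_0^\tau X_s\,ds$. Writing $R_t:=|\psi|_p^p+\Phi_t$, with $\Phi_t$ the non-decreasing accumulated free-data drift satisfying $\Phi_t\le\bar\Phi:=N\big(\|u\|_\infty^{p-1}\|f^0\|_1+\|u\|_\infty^{p-2}(\sum_i\|f^i\|_2^2+\||g|_{l_2}\|_2^2)\big)$, this yields $X_\tau\le m_\tau+N_0\int_0^\tau X_s\,ds+R_\tau$. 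Taking $\E(\cdot\mid\mathscr{F}_0)$ and using that $m$ is a genuine martingale (its bracket has finite expectation by Lemma \ref{lem: moment bounds for bounded f}), so $\E(m_\tau\mid\mathscr{F}_0)=0$, and setting $A_t:=N_0\int_0^t X_s\,ds+R_t$ --- which is adapted, continuous and non-decreasing --- I obtain $\E(X_\tau\mid\mathscr{F}_0)\le\E(A_\tau\mid\mathscr{F}_0)$ for every bounded stopping time $\tau$.

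Proposition \ref{Revuz-Yor} then gives $\E\sup_{t\le T}X_t^{\eta/p}\le N\,\E A_T^{\eta/p}$, and by subadditivity of $x\mapsto x^{\eta/p}$ together with $\int_0^T X_s\,ds\le T\sup_{t\le T}X_t$,
$$
\E\sup_{t\le T}X_t^{\eta/p}\le NT^{\eta/p}\,\E\sup_{t\le T}X_t^{\eta/p}+N\,\E R_T^{\eta/p}.
$$
The supremum moment is finite by \eqref{eq: finiteness of |u|^p} and Jensen, so after a standard absorption --- splitting $[0,T]$ into finitely many subintervals on which the constant multiplying the supremum is at most $1/2$ (their number depending only on the structure constants, $p$ and $\eta$), or equivalently a singular Gronwall inequality --- the first term is absorbed and $\E\sup_{t\le T}X_t^{\eta/p}\le N\,\E R_T^{\eta/p}$.

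It remains to estimate $\E R_T^{\eta/p}$. Using $R_T\le|\psi|_p^p+\bar\Phi$, subadditivity, and Young's inequality in the forms $\|u\|_\infty^{(p-1)\eta/p}\|f^0\|_1^{\eta/p}\le\delta\|u\|_\infty^\eta+C(\delta)\|f^0\|_1^\eta$ and $\|u\|_\infty^{(p-2)\eta/p}\|f^i\|_2^{2\eta/p}\le\delta\|u\|_\infty^\eta+C(\delta)\|f^i\|_2^\eta$ (and likewise for $g$), one obtains
$$
\E R_T^{\eta/p}\le N\delta\,\E\|u\|_\infty^\eta+NC(\delta)\,\E\big(|\psi|_p^\eta+\|f^0\|_1^\eta+\sum_i\|f^i\|_2^\eta+\||g|_{l_2}\|_2^\eta\big).
$$
Combining with $\E Y^{\eta/p}\le 2^{\eta/p}\E\sup_t X_t^{\eta/p}$ and choosing $\delta$ so small that $2^{\eta/p}N^2\delta\le\epsilon$ finishes the proof. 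I expect the main obstacle to be the bounded-variation term $N_0\int_0^t|u_s|_p^p$ coming from $b,c,\mu$: it cannot be controlled by the data alone, and since a sub-linear moment of the supremum is required, the crucial point is to route it through the comparison process $A$ in Proposition \ref{Revuz-Yor} and only afterwards absorb it, rather than taking conditional expectations of the free data, which would be incompatible with the concavity of $x\mapsto x^{\eta/p}$.
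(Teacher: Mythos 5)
Your proof is correct in substance, and its skeleton coincides with the paper's: the It\^o formula \eqref{eq:Ito formula for |u|^p} plus parabolicity and Young's inequality, pulling $\|u\|_\infty$ out of the free-data terms, Proposition \ref{Revuz-Yor} to pass to the sublinear moment $\eta/p$, and a final Young inequality producing the $\epsilon\E\|u\|_\infty^\eta$ term. The genuine difference is how the linear term $N_0\int_0^t|u_s|_p^p\,ds$ is disposed of. The paper eliminates it \emph{before} invoking Proposition \ref{Revuz-Yor}: taking expectations against $I_B$, $B\in\mathscr{F}_0$, of the stopped inequality and applying Gronwall's lemma, it obtains a domination $\E I_B(\cdot)_\tau\le N\,\E I_B\mathscr{V}_\tau$ in which the dominating process $\mathscr{V}$ is built from $\psi$, $f^l$, $g$ (and powers of $u$ later absorbed into $\|u\|_\infty$) only; and since the paper also wants $\sup_{t\le\tau}|u_t|_p^p$ inside the dominated quantity, it additionally runs a Burkholder--Davis--Gundy estimate on $\sup_{t\le\tau}|m_t|$. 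You instead keep $N_0\int_0^t X_s\,ds$ inside the dominating process $A$, let the Lenglart-type domination of Proposition \ref{Revuz-Yor} produce the supremum for free---thereby avoiding BDG altogether---and only afterwards absorb the resulting $N(N_0T)^{\eta/p}\E\sup_t X_t^{\eta/p}$, which costs you the interval-splitting and chaining argument. That argument does work: finiteness of $\E\sup_t X_t^{\eta/p}$ follows from \eqref{eq: finiteness of |u|^p} and Jensen, the number of subintervals depends only on admissible parameters, and Proposition \ref{Revuz-Yor} applies on each $[t_i,t_{i+1}]$ with $\mathscr{F}_0$ replaced by $\mathscr{F}_{t_i}$---but you should state this explicitly, since each subinterval's ``initial datum'' $|u_{t_i}|_p^p$ is only $\mathscr{F}_{t_i}$-measurable and must be chained through the estimate on the preceding interval. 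In short: the paper trades on the linearity of first (conditional) moments, where Gronwall is available, at the price of a BDG step; you trade the BDG step for the splitting.

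One repairable imprecision: your justification that $m$ is a genuine martingale (``its bracket has finite expectation by Lemma \ref{lem: moment bounds for bounded f}'') is not quite right. Under the hypotheses one controls only $p$-th moments, and $\langle m\rangle_T$ is bounded by the product $\sup_t|u_t|_p^p\cdot\int_0^T\!\int_Q(|\nabla u|^2+|u|^2+|g|_{l_2}^2)|u|^{p-2}\,dx\,ds$, whose expectation need not be finite. What Lemma \ref{lem: moment bounds for bounded f} does give, after Cauchy--Schwarz in $x$ and Young, is $\E\langle m\rangle_T^{1/2}<\infty$; BDG then yields $\E\sup_t|m_t|<\infty$, so the local martingale $m$ is a uniformly integrable martingale and $\E(m_\tau\mid\mathscr{F}_0)=0$ by optional stopping. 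The paper relies on the same fact implicitly when it discards the martingale term in its Gronwall step, so this is a gap in justification, not in the method.
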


\begin{proof}
As in the proof of corollary \ref{ito-corollary}, for any $\mathscr{F}_0-$measurable set $B$, we have almost surely
$$
I_B \int_Q|u_t|^pdx  +\frac{p^2}{4}I_B \int_0^t\int_Q|\nabla u_s|^2|u_s|^{p-2}dxds\leq N'I_Bm_t + N_1I_B \left(\int_Q|\psi|^pdx\right.
$$
\begin{equation}             \label{eq: after Ito}
+\int_0^t\left[\int_Q p^2|u_s|^p\left.+p|f^0_s||u_s|^{p-1}+p^2\sum_{i=1}^d|f^i_s|^2|u_s|^{p-2}+p^2 |g_s|_{l_2}^2|u_s|^{p-2}dx\right]ds\right),
\end{equation}
for any $t\in [0,T]$. The above relation, by virtue of Gronwal's lemma implies that for any stopping time $\tau \leq T$ 
\begin{equation}     \label{eq:estimate for the derivatives}
\sup_{t\leq T} \E I_B \int_Q|u_{t\wedge \tau}|^pdx  +\E I_B \int_0^\tau \int_Q|\nabla u_s|^2|u_s|^{p-2}dxds \leq N \E I_B \mathscr{V}_\tau, 
\end{equation}
where 
 $$
 \mathscr{V}_t:= \int_Q|\psi|^pdx
+\int_0^t \int_Q  |f^0_s||u_s|^{p-1}+\sum_{i=1}^d|f^i_s|^2|u_s|^{p-2}+ |g_s|_{l_2}^2|u_s|^{p-2}dx ds.
$$
Going back to \eqref{eq: after Ito}, and taking suprema up to $\tau$ and expectations, and having in mind \eqref{eq:estimate for the derivatives},  gives 
$$
\E \sup_{t \leq \tau}I_B \int_Q|u_t|^pdx  \leq N \E \sup_{t \leq \tau}  I_B|m_t| +N \E I_B \mathscr{V}_\tau.
$$
By the Burkholder-Gundy-Davis inequality  and \eqref{eq:estimate for the derivatives}  we have
 \begin{align}  \nonumber 
 \E \sup_{t \leq \tau}  I_B|m_t|  & \leq N \E I_B \left( \int_0^\tau
 \left( \int_Q |u_t|^{p-2}\left( |\nabla u_t|+|u_t|+|g|_{l_2}\right) dx  \right)^2 dt \right)^{1/2}  
 \\   \nonumber 
 & \leq N \E I_B  \left( \int_0^\tau
  \int_Q |u_t|^pdx \int_Q ( |\nabla u_t|^2 +|u_t|^2+|g|_{l_2}^2) |u|^{p-2} dx  dt \right)^{1/2}  
 \\ \nonumber 
 &\leq \frac{1}{2} \E \sup_{t\leq \tau } I_B \int_Q|u_t |^pdx 
 + N \E I_B \mathscr{V}_\tau.
\end{align}
Hence,
$$
\E \sup_{t \leq \tau}I_B \int_Q|u_t|^pdx  \leq N \E I_B \mathscr{V}_\tau,
$$
which combined with \eqref{eq:estimate for the derivatives}, by virtue of Lemma \ref{Revuz-Yor} gives 
$$
\E \left( \sup_{t \leq T} |u_t|^p_p + \frac{p^2}{4} \E\int_0^T \int_Q |\nabla u_s|^2|u_s|^{p-2} dxds\right)^{\eta/p}  \leq N \E \mathscr{V}_T^{\eta/p}
$$
\begin{align} \nonumber 
&\leq N \E \left[ |\psi|^p_p +\|u\|_\infty^{p-1}\|f^0\|_1+\|u\|_\infty^{p-2} \left( \sum_{i=1}^d \|f^i\|_2^2+\||g|_{l_2}\|^2_2\right) \right]^{\eta/p} \\
\nonumber 
&\leq  \epsilon \E\|u\|_{\infty}^\eta +N \E \left[|\psi|_p^\eta +\|f^0\|_{1}^\eta+\sum_{i=1}^d\|f^i\|_{2}^\eta+\||g|_{l_2}\|_{2}^\eta \right], 
\end{align}
which brings the proof to an end.
\end{proof}

\section{Proof of Theorem \ref{thm:main theorem}}\label{section-proof}

\begin{proof} Throughout the proof, the constants $N$ in our calculations will be allowed to depend on $\eta,r,q$ as well as on the structure constants. Notice that we may, and we will assume that $r,q < \infty$. Without loss of generality  we assume that the right hand side in $\eqref{eq:main estimate}$ is finite. Also, in the first part of the proof we make the assumption that $\psi$, $f^l$, $l=0,\ldots,d$, and $g$ are bounded by a constant $M$. in particular, by $\eqref{eq: finiteness of |u|^p}$, $u\in L_{\eta}(\Omega, L_{r,q})$ for any $\eta,r,q$.

Let us introduce the notation
$$
\mathcal{M}_{r,q,p}(t)=\|\mathbf{1}_{[0,t]}f^0\|_{r,q}^p+\sum_{i=1}^d\|\mathbf{1}_{[0,t]}f^i\|_{2r,2q}^p+\|\mathbf{1}_{[0,t]}|g|_{l_2}\|_{2r,2q}^p.
$$
Since $(r,q)\in \Gamma_d$, if we define $r'$ and $q'$ by $1/r+2/r'=1$, $1/q+2/q'=1$, we have
$$
\frac{d}{4}<\frac{1}{r'}+\frac{d}{2q'}=:\gamma\frac{d}{4}
$$
for some $\gamma>1$. Then $\hat{r}=\gamma r'$ and $\hat{q}=\gamma q'$ satisfy
$$
\frac{1}{\hat{r}}+\frac{d}{2\hat{q}}=\frac{d}{4}.
$$
By applying Lemma \ref{embedding} to $\hat{r},\hat{q}$, and $\bar{v}=|v|^{p/2}$, we have, for any $p\geq 2$
$$
\E \left[|\psi|_{\infty}^{\eta}\vee \left( \int_0^T\left(\int_Q |v_t|^{\hat{q}p/2}dx \right)^{\hat{r}/\hat{q}}dt \right)^{2\eta/\hat{r}p}\right]
$$
\begin{equation}\label{bound from below}
\leq \left[\E |\psi|_{\infty}^{\eta}\vee  N^{\eta/p}\left(\sup_{0\leq t\leq T}\int_Q|v_t|^{p}dx+\frac{p^2}{4}\int_0^T\int_Q|\nabla v_t|^2|v_t|^{p-2}dx dt\right)^{\eta/p}\right].
\end{equation}
To estimate the right-hand side above, first notice that, if $p=2\gamma^n$ for some $n$, then  by taking supremum in \eqref{eq:corollary}, we have for any stopping time $\tau\leq T$, and any $\mathscr{F}_0-$ measurable set $B$,
$$
I_B \sup_{0\leq s\leq\tau}\int_Q|v_s|^{p}dx
$$
\begin{equation}\label{intermediate}
\leq N  I_B \left(|\psi|_{\infty}^{p}+p^{\kappa}\|\mathbf{1}_{[0,\tau]}v\|_{r'p/2,q'p/2}^{p}+p^{-p}\mathcal{M}_{r,q,p}(\tau)\right)
+N'  I_B \sup_{0\leq s \leq \tau}|m_s|,
\end{equation}
By the Davis inequality we can write
$$
\E  I_B \sup_{0\leq s \leq \tau}|m_s|\leq N\E  I_B \left(\int_0^{\tau}\sum_k \left(\int_Q p(\sigma^{ik}_s \D_i v_s +\mu^kv_s+g^k) v_s|v_s|^{p-2}dx \right)^2ds\right)^{1/2}
$$
$$
\leq N\E  I_B \left(\sup_{0\leq s\leq\tau}\int_Q|v_s|^pdx \right)^{1/2}\left(\int_0^{\tau}\int_Qp^2\sum_k|\sigma^{ik}_s \D_i v_s +\mu^kv_s+g^k|^2|v_s|^{p-2}dxds\right)^{1/2}.
$$
Applying Young's inequality and recalling the already seen estimates in the proof of Corollary \ref{ito-corollary} (i) for the second term yields
$$
\E  I_B \sup_{0\leq s \leq \tau}|m_s|\leq \varepsilon\E  I_B \sup_{0\leq s\leq\tau}\int_Q|v_s|^p dx
$$
$$
+\frac{N}{\varepsilon}\E  I_B \left(p^2\int_{0}^{\tau}\int_Q|\nabla v_s|^2|v_s|^{p-2}dx ds+p^{\kappa}\|\mathbf{1}_{[0,\tau]}v\|_{r'p/2,q'p/2}^{p}+p^{-p}\|\mathbf{1}_{[0,\tau]}|g|_{l_2}\|_{2r,2q}^p\right)
$$
for any $\varepsilon>0.$ With the appropriate choice of $\varepsilon,$ combining this with (\ref{intermediate}) and using \eqref{eq:corollary} once again, now without taking supremum, we get

$$
\E I_B \left(\sup_{0\leq s\leq \tau}\int_Q|v_s|^p dx+\frac{p^2}{4}\int_0^{\tau}\int_Q|\nabla v_s|^2|v_s|^{p-2} dx ds\right) 
$$
\begin{align}
\nonumber
& \leq N  \E  I_B\Big( |\psi|_{\infty}^p+ p^2 \int_{0}^{\tau}\int_Q|\nabla v_s|^2|v_s|^{p-2}dx ds
+p^{\kappa}\|\mathbf{1}_{[0,\tau]}v\|_{r'p/2,q'p/2}^{p}+p^{-p}\mathcal{M}_{r,q,p}(\tau)\Big)\\ \nonumber 
& \leq N\E   I_B \left(|\psi|_{\infty}^p+p^{\kappa}\|\mathbf{1}_{[0,\tau]}v\|_{r'p/2,q'p/2}^{p}+p^{-p}\mathcal{M}_{r,q,p}(\tau)\right)+N'\E  I_B m_{\tau},
\end{align}
and the last expectation vanishes. Now consider
$$
X_t=|\psi|_{\infty}^p\vee\left( \sup_{0\leq s\leq t}\int_Q|v_s|^p dx+\frac{p^2}{4}\int_0^t\int_Q|\nabla v_s|^2|v_s|^{p-2} dx ds \right)
$$
and
$$
A_t=Cp^{\kappa}\left(|\psi|_{\infty}^p\vee \|\mathbf{1}_{[0,t]}v\|_{r'p/2,q'p/2}^{p}+p^{-p}\mathcal{M}_{r,q,p}(t)\right)
$$
for a large enough, but fixed $C$. The argument above gives that
\begin{align}
\nonumber 
\E I_B X_{\tau} & \leq \E I_B \left(|\psi|_{\infty}^p+\sup_{0\leq s\leq \tau}\int_Q|v_s|^pdx +\frac{p^2}{4}\int_0^{\tau}\int_Q|\nabla v_s|^2|v_s|^{p-2}dx ds\right)
\\ \nonumber 
& \leq N\E I_B \left(|\psi|_{\infty}^p+p^{\kappa}\|\mathbf{1}_{[0,\tau]}v\|_{r'p/2,q'p/2}^{p}+p^{-p}\mathcal{M}_{r,q,p}(\tau)\right)\leq \E I_B A_{\tau}.
\end{align}
Therefore the condition of Proposition \ref{Revuz-Yor} is satisfied, and thus for $\eta< p$ we obtain
$$
\E \left(|\psi|_{\infty}^p\vee \left(\sup_{0\leq t\leq T}\int_Q|v_t|^pdx +\frac{p^2}{4}\int_0^T \int_Q|\nabla v_t|^2|v_t|^{p-2}dx dt\right) \right)^{\eta/p}
$$
\begin{align}  \nonumber 
& \leq (Np^{\kappa+1})^{\eta/p}\frac{p}{p-\eta}\E\left(|\psi|_{\infty}^p\vee \|v\|_{r'p/2,q'p/2}^{p}+p^{-p}\mathcal{M}_{r,q,p}(T)\right)^{\eta/p} \\ 
\label{bound from above}
& \leq (Np^{\kappa+1})^{\eta/p}\frac{p}{p-\eta}\E\left(|\psi|_{\infty}^{\eta}\vee \|v\|_{r'p/2,q'p/2}^{\eta}+p^{-\eta}\mathcal{M}_{r,q,\eta}(T)\right).
\end{align}
Let us choose $p=p_n=2\gamma^n$ for $n\geq0$, and use the notation $c_n=(Np_n^{\kappa+1})^{\eta/p_n}\frac{p_n}{p_n-\eta}$. Upon combining \eqref{bound from below} and \eqref{bound from above}, for $p_n>\eta$ we can write the following inequality, reminiscent of Moser's iteration:
\begin{equation}\label{iteratable}
\E |\psi|_{\infty}^{\eta}\vee\|v\|_{r'p_{n+1}/2,q'p_{n+1}/2}^{\eta}\leq c_n\E\left[|\psi|_{\infty}^{\eta}\vee\|v\|_{r'p_n/2,q'p_n/2}^{\eta}
+Np_n^{-\eta}\mathcal{M}_{r,q,\eta}(T)\right].
\end{equation}
Consider the minimal $n_0=n_0(d,\eta)$ such that $p_{n_0}>2\eta$. Taking any integer $m\geq n_0$ we have
\begin{align}
\nonumber 
\prod_{n=n_0}^{m}c_n & \leq\prod_{n=n_0}^{m}(N\gamma^{\kappa+1})^{\eta n/{2\gamma^n}}e^{2\eta/2\gamma^n} \\ \nonumber 
&=\exp\left[\log(N\gamma^{\kappa+1})\sum_{n=n_0}^{m}\frac{\eta n}{2\gamma^n}+\sum_{n=n_0}^{m}\frac{\eta}{\gamma^n}\right]\leq N_0,
\end{align}
where $N_0$ does not depend on $m$. Also,
$$
N\sum_{n=n_0}^{m}p_n^{-\eta}\leq N_1,
$$
where $N_1$ does not depend on $m$. Therefore, by iterating (\ref{iteratable}) we get
\begin{align}
\nonumber 
\liminf_{m\rightarrow\infty}\E |\psi|_{\infty}^{\eta}\vee \|v\|_{r'p_{m}/2,q'p_{m}/2}^{\eta}\leq & N_0N_1 \E \mathcal{M}_{r,q,\eta}(T) \\
\nonumber &+N_0 \E |\psi|_{\infty}^{\eta}\vee \|v\|_{r'({p_{n_0+1}})/2,q'({p_{n_0+1}})/2}^{\eta}, 
\end{align}
and thus by Fatou's lemma
\begin{equation}\label{uniform-estimate1}
\E\| v\|_{\infty}^{\eta}\leq N\E(|\psi|_{\infty}^{\eta} \vee\|v\|^\eta_{r'(p_{n_0+1})/2,q'(p_{n_0+1})/2}+ \mathcal{M}_{r,q,\eta}(T)),
\end{equation} 
in particular, the left-hand side is finite. By Lemma \ref{lem: estimate for low f} we get 
$$
\E \left(|\psi|_{\infty}^p\vee\left( \sup_{0\leq t\leq T}\int_Q|v_t|^pdx +\frac{p^2}{4}\int_0^T \int_Q|\nabla v_t|^2|v_t|^{p-2}dx dt\right) \right)^{\eta/p}
$$
\begin{equation}\label{whatever}
\leq \epsilon\E\|v\|_{\infty}^{\eta}+N(\epsilon,p)\E\left(|\psi|_{\infty}^{\eta}+\mathcal{M}_{1,1,\eta}(T)\right)
\end{equation}
for any $\epsilon>0$. Combining \eqref{bound from below} and \eqref{whatever} for $p=p_{n_0}$ gives
$$
\E |\psi|_{\infty}^{\eta}\vee\|v\|_{r'(p_{n_0+1})/2,q'(p_{n_0+1})/2}^{\eta}=\E |\psi|_{\infty}^{\eta}\vee\|v\|_{\hat{r}p_{n_0}/2,q'p_{n_0}/2}^{\eta}
$$
\begin{equation}\label{whatever2}
\leq \epsilon\E\|v\|_{\infty}^{\eta}+N(\epsilon,p_{n_0})\E\left(|\psi|_{\infty}^{\eta}+\mathcal{M}_{1,1,\eta}(T)\right).
\end{equation}
Choosing $\epsilon$ sufficiently small, plugging \eqref{whatever2} into \eqref{uniform-estimate1}, and rearranging yields the desired inequality
\begin{equation}\label{uniform-estimate2}
\E\| v\|_{\infty}^{\eta}\leq N\E(|\psi|_{\infty}^{\eta}+\mathcal{M}_{r,q,\eta}(T)).
\end{equation}

As for the general case, set
$$\psi^{(n)}=\psi\wedge n,\;\;\;f^{l,(n)}=f^l\wedge n,\;\;\;g^{k,(n)}=g^k\wedge (n/k),$$
define $\mathcal{M}_{r,q,p}^{(n)}$ correspondingly, and let $v^n$ be the solution of the corresponding equation. This new data is now bounded by a constant, so the previous argument applies, and thus
$$\E\| v^n\|_{\infty}^{\eta}\leq N\E(|\psi^{(n)}|_{\infty}^{\eta}+\mathcal{M}_{r,q,\eta }^{(n)}(T)\leq
 N\E(|\psi|_{\infty}^{\eta}+\mathcal{M}_{r,q,\eta}(T)).$$
Since $v^n\rightarrow v$ in $\mathbb{L}_2$, for a subsequence $k(n)$, $v^{k(n)}\rightarrow v$ for almost every $\omega,t,x$. In particular, almost surely $\|v\|_{\infty}\leq\liminf_{n\rightarrow\infty}\|v^{k(n)}\|_{\infty}$, and by Fatou's lemma
$$\E\| v\|_{\infty}^{\eta}\leq\liminf_{n\rightarrow\infty}\E\|v^{k(n)}\|_{\infty}^{\eta}\leq N\E(|\psi|_{\infty}^{\eta}+\mathcal{M}_{r,q,\eta}(T)).$$

\end{proof}

\section{Semilinear SPDEs}             \label{section-semilinear}
In this section, we will use the uniform norm estimates obtained in the previous section, to construct solutions for the following equation
\begin{equation}                                                  \label{SLSPDE} 
du_t=( L_t u_t +f_t(u_t))dt
+(M^k_t u_t+g^k_t)dw^k_t, \\
  u_0=\psi
\end{equation}
for $(t,x) \in [0,T] \times Q$, where  $f$ is a real function defined on $\Omega \times[0,T] \times Q \times \bR$ and is 
$\mathscr{P} \times \mathscr{B}(\bR^d)\times \mathscr{B}(\bR)- $measurable.

\begin{assumption}                              \label{Assumption_on_f}
 
The function $f$ satisfies the following 

i) for all $r,r' \in \bR$ and for all $(\omega,t,x)$  we have
$$
(r-r')(f_t(x,r)-f_t(x,r')) \leq K|r-r'|^2
$$

ii) For all $(\omega,t,x)$, $f_t(x,r)$ is continuous in $r$

iii) for all $N>0$, there exists a  function $h^N\in \mathbb{L}_2$ with $\E\|h^N\|_\infty < \infty$,
such that for any $(\omega,t,x)$
$$ 
|f_t(x,r)| \leq  |h^N_t(x)|,
$$
whenever $|r| \leq N$.

iv) $\E|\psi|_\infty+\E \||g|_{l_2}\|_\infty < \infty$
\end{assumption}
\begin{definition}                           \label{definition_of_solution} 
A solution of equation \eqref{SLSPDE} is an $\ \mathscr{F}_t-$adapted, strongly continuous process $(u_t)_{t\in [0,T]}$ with values in $L_2(Q)$ such that 

i) $u_t \in H^1_0, \ \text{for $dP \times dt$ almost every} \  (\omega,t) \  \in \Omega\times [0,T]$ 

ii) $  \int_0^T |u_t|_2^2+|\nabla u_t|_2^2  dt< \infty$  (a.s.)

iii) almost surely, $u$ is essentially bounded in $(t,x)$

iv) for all $\phi\in C_c^\infty(Q)$ we have with probability one
\[(u_t,\phi)=(\psi,\phi)+\int_0^t -(a^{ij}_s\D_iu_s,\D_j \phi)+(b^i_s\D_iu_s+c_su_s, \phi)+(f_s(u_s),\phi)ds \]
\[+\int_0^t(M^k_su_s+g^k_s,\phi)dw^k_s, \]
 for all $t \in [0,T]$.
 
\end{definition}  
Notice that by Assumption \ref{Assumption_on_f} iii), and (iii) from Definition \ref{definition_of_solution}, the term $\int_0^t(f_s(u_s),\phi)ds$ is meaningful.

\begin{theorem}                                              \label{thm: semilinear}
Under Assumptions \ref{as:coef and free terms}, \ref{as: parabolicity}, and \ref{Assumption_on_f}, there exists a unique solution of equation (\ref{SLSPDE}).
\end{theorem}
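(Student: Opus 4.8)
The plan is to handle \eqref{SLSPDE} by a monotone Lipschitz approximation of the nonlinearity, using Theorem \ref{thm:main theorem} for the a priori bound and the comparison principle adapted from \cite{GP} to pass to the limit. Uniqueness I would dispose of first: if $u,v$ are two solutions, then $w=u-v$ solves a linear equation of the form \eqref{SPDE} with zero initial datum and free term $f(u)-f(v)$, which is bounded thanks to essential boundedness (Definition \ref{definition_of_solution}(iii)) and Assumption \ref{Assumption_on_f}(iii). Hence the It\^o formula of Lemma \ref{lem: moment bounds for bounded f} applies to $|w_t|_2^2$; parabolicity (Assumption \ref{as: parabolicity}) absorbs the gradient terms together with the quadratic-variation contribution of the noise, while Assumption \ref{Assumption_on_f}(i) yields $(f(u)-f(v),w)\le K|w|_2^2$. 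Gr\"onwall's lemma then forces $\E|w_t|_2^2=0$, so $u=v$.

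For existence I would write $f_t(x,r)=\tilde f_t(x,r)+Kr$, so that by Assumption \ref{Assumption_on_f}(i) the map $r\mapsto\tilde f_t(x,r)$ is non-increasing, and replace $\tilde f$ by its inf-convolutions $\tilde f^{(n)}_t(x,r)=\inf_s\big(\tilde f_t(x,s)+n|r-s|\big)$. These stay non-increasing and continuous in $r$, are globally Lipschitz in $r$, and increase pointwise to $\tilde f$; correspondingly $f^{(n)}:=\tilde f^{(n)}+K(\cdot)$ is Lipschitz, satisfies Assumption \ref{Assumption_on_f}(i) with the same $K$, and $f^{(n)}\le f^{(n+1)}\uparrow f$, with $f^{(n)}(\cdot,0)$ controlled by $h^1$ from Assumption \ref{Assumption_on_f}(iii). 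Since each $f^{(n)}$ is Lipschitz, the operator together with the noise coefficients fits the monotone framework of \cite{KSEE}, giving a unique $L_2$-solution $u^{(n)}$.

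The heart of the matter is a bound on $\|u^{(n)}\|_\infty$ that is independent of $n$. I would rerun the estimates behind Theorem \ref{thm:main theorem} for $u^{(n)}$: since $\tilde f^{(n)}$ is non-increasing one has $\tilde f^{(n)}_t(x,r)\,r\le \tilde f^{(n)}_t(x,0)\,r$ for every $r$, so in the energy identity for $|u^{(n)}|_p^p$ the genuinely nonlinear part is dominated by a free term with datum $\tilde f^{(n)}(\cdot,0)$, while the linear contribution $K|u^{(n)}|_p^p$ is absorbed by Gr\"onwall. Thus the conclusion of Theorem \ref{thm:main theorem} holds with effective free datum $f^0=\tilde f^{(n)}(\cdot,0)$, uniformly bounded by $h^1$, whence $\E\|u^{(n)}\|_\infty^\eta\le N$ uniformly in $n$. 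Because $f^{(n)}\le f^{(n+1)}$, the comparison principle of \cite{GP} gives $u^{(n)}\le u^{(n+1)}$, so $u^{(n)}\uparrow u$ almost surely, and the uniform bound shows $u$ is a.s.\ bounded with $\E\|u\|_\infty^\eta<\infty$. Passing to the limit in Definition \ref{definition_of_solution}(iv) then uses $f^{(n)}(u^{(n)})\to f(u)$ a.e.\ (continuity from Assumption \ref{Assumption_on_f}(ii) and the local uniform convergence $f^{(n)}\to f$), with the drift integral handled by dominated convergence via Assumption \ref{Assumption_on_f}(iii) and the uniform bound, while the linear and stochastic terms pass by $\mathbb{L}_2$-convergence of $u^{(n)}$.

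The main obstacle is the uniform-in-$n$ control of a nonlinearity that is merely one-sided Lipschitz and possibly super-linear: taking $f^0=f^{(n)}(u^{(n)})$ in Theorem \ref{thm:main theorem} produces $n$-dependent constants, and it is precisely the dissipativity encoded in Assumption \ref{Assumption_on_f}(i) that lets the dangerous part of the nonlinearity drop from the energy estimate, leaving the $n$-independent datum $f(\cdot,0)$. Verifying that this absorption survives not only the first energy identity but the entire Moser iteration of Section \ref{section-proof} is the delicate point. The comparison principle then plays the indispensable role of upgrading the uniform but merely in-expectation estimate to a genuine monotone limit, thereby circumventing the difficulty that the pathwise bound on $\|u^{(n)}\|_\infty$ is $\mathscr{F}_T$- rather than $\mathscr{F}_0$-measurable, so that truncation levels cannot be compared to it path by path.
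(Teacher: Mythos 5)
There is a genuine gap in the existence part, and it is located exactly where the generality of Assumption \ref{Assumption_on_f} matters. Your approximating sequence $\tilde f^{(n)}_t(x,r)=\inf_s\bigl(\tilde f_t(x,s)+n|r-s|\bigr)$ is identically $-\infty$ whenever $\tilde f$ decays superlinearly: take $f_t(x,r)=-r^3$, which satisfies Assumption \ref{Assumption_on_f} with $K=0$ and $h^N\equiv N^3$ (this is precisely the kind of nonlinearity the theorem is meant to cover, since only a local bound $|f_t(x,r)|\le|h^N_t(x)|$ is assumed); then for every $n$ and every $r$, letting $s\to+\infty$ gives $\tilde f_t(x,s)+n|r-s|\to-\infty$, so no inf-convolution exists. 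Even in the regime where the inf-convolution is finite (linear decay with constant at most $n$), it only satisfies the one-sided bound $\tilde f^{(n)}(\cdot,0)\le\tilde f(\cdot,0)$; there is no lower bound by $-|h^1|$, so your claim that $f^{(n)}(\cdot,0)$ is ``controlled by $h^1$'' fails, and with it the $n$-independent free datum that your uniform $L_\infty$ bound rests on: the dissipativity inequality $\tilde f^{(n)}(r)\,r\le\tilde f^{(n)}(0)\,r$ is useless when $\tilde f^{(n)}(0)$ is very negative and $r<0$. The correct approximation, and the one the paper uses, is two-sided truncation, $f^{n,m}_t(x,r)=f_t\bigl(x,(r\wedge m)\vee(-n)\bigr)$: it is bounded by $|h^{n\vee m}|$, preserves monotonicity, and---crucially---\emph{coincides} with $f$ for $-n\le r\le m$.

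This coincidence is what makes the paper's proof structurally different from (and much lighter than) your program. Because $f^{n,m}=f^{R,m}$ wherever the solutions stay above $-R$, the comparison principle (Theorem \ref{thm: comparison}) plus uniqueness for the truncated equation imply that all $u^{n,m}$ with $n\ge R$ \emph{literally agree} up to the stopping time $\tau^{R,m}$ at which $u^{1,m}$ first drops below $-R$; Theorem \ref{thm:main theorem} is invoked only for a single fixed truncated equation (treating $f^{n,m}(u^{n,m})$, bounded by $h^{n\vee m}$, as the free term $f^0$) to conclude that these stopping times exhaust $[0,T]$ almost surely. Hence no uniform-in-$n$ estimate, no passage to the limit in the nonlinearity, the gradient terms, or the stochastic integral, and no separate argument for continuity, adaptedness, or $H^1_0$-regularity of the limit are ever needed---the limit inherits everything because it agrees locally in time with an $L_2$-solution. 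In your scheme all of these steps are genuinely required (inf-convolutions never coincide with $f$, so solutions only converge, they never stabilize), and the central one---rerunning the whole Moser iteration of Section \ref{section-proof} with the dissipativity trick---is only flagged as ``delicate'', not carried out. Your uniqueness argument is sound in spirit but has the same localization gap: $f(u)-f(v)$ is only almost surely bounded, not in $\mathbb{L}_2$, so Lemma \ref{lem: moment bounds for bounded f} does not apply directly; introducing the stopping times $\tau_N$ of the paper (before which both solutions solve the equation with $f^{N,N}$) repairs it, at which point one may as well quote uniqueness for the truncated equation.
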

\begin{remark}
From now on we can and we will assume that the function $f$ is decreasing in $r$ or else, by virtue of Assumption \ref{Assumption_on_f},  we can replace $f_t(x,r)$ by 
$\tilde{f}_t(x,r):= f_t(x,r)-Kr$ and $c_t(x)$ with $\tilde{c}_t(x):= c_t(x)+K$.
\end{remark}
We will need the following particular case from \cite{KI}. We consider two equations 
\begin{equation}                                \label{eq:SPDE linear growth}   
du^i_t= (L_tu^i_t+f_t^i(u^i_t))dt + (M^k_tu^i_t+g^k_t)dw^k_t, \ u^i_0= \psi^i,
\end{equation}
for $i=1,2$. 
\begin{assumption}                                             \label{as: linear growth}
The functions $f^i$, $i=1,2$, are appropriately measurable, and there exists $h \in \mathbb{L}_2$ and a constant $C>0$, such that for any $\omega, t,x$, and for any $r\in \mathbb{R}$ we have
$$
|f^1_t(x,r)|^2+|f^2_t(x,r)|^2 \leq C|r|^2+|h_t(x)|^2.
$$
\end{assumption}
\begin{theorem}                            \label{thm: comparison}
Suppose that Assumptions \ref{as: parabolicity}, \ref{as:coef and free terms} and \ref{as: linear growth} hold. Let $u^i$, $i=1,2$ be the $L_2-$ solutions of the equations in \eqref{eq:SPDE linear growth}, for $i=1,2$ respectively. Suppose that $f^1 \leq f^2$, $\psi^1 \leq \psi^2$ and assume that either $f^1$ or $f^2$ satisfy Assumption \ref{Assumption_on_f}. Then, almost surely and for any $t \in [0,T]$, 
$u^1_t \leq u^2_t$ for almost every $x \in Q$.
\end{theorem}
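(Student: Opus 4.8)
The plan is to prove the comparison principle $u^1_t \leq u^2_t$ by establishing that the positive part $(u^1_t - u^2_t)^+$ vanishes, and the natural way to do this is to apply an It\^o formula to a suitable smooth approximation of the function $r \mapsto ((r)^+)^2$ evaluated at the difference $w_t := u^1_t - u^2_t$, then use a Gronwall argument. First I would set $w = u^1 - u^2$, which by linearity of $L_t$ and $M^k_t$ solves the equation
\[
dw_t = \bigl(L_t w_t + f^1_t(u^1_t) - f^2_t(u^2_t)\bigr)dt + M^k_t w_t\, dw^k_t, \quad w_0 = \psi^1 - \psi^2 \leq 0,
\]
where the noise term has no free part since the $g^k$ cancel. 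This cancellation of the additive noise is crucial: it means $w$ solves a \emph{homogeneous} (in the noise) equation, so the stochastic integral will not obstruct the estimate.

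The key decomposition is to split the drift difference as
\[
f^1_t(u^1_t) - f^2_t(u^2_t) = \bigl(f^1_t(u^1_t) - f^1_t(u^2_t)\bigr) + \bigl(f^1_t(u^2_t) - f^2_t(u^2_t)\bigr).
\]
The second bracket is $\leq 0$ everywhere by the hypothesis $f^1 \leq f^2$, so it can only help. For the first bracket I would use the one-sided Lipschitz/monotonicity structure: having reduced (by the preceding Remark) to the case where $f^1$ is decreasing in $r$, one has $(f^1_t(u^1_t) - f^1_t(u^2_t)) \cdot (u^1_t - u^2_t) \leq 0$, and when multiplied by $w_t^+$ and integrated, this term is controlled with the correct sign on the set $\{w_t > 0\}$. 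Applying the It\^o formula (Lemma \ref{lem: moment bounds for bounded f}-type, or a $\phi$-regularized version of $(r^+)^2$) to $|w_t^+|_2^2$, the parabolicity Assumption \ref{as: parabolicity} furnishes the bound in which the second-order term $a^{ij}\partial_i w \partial_j w$ and the It\^o correction from $M^k$ combine to give $\lambda|\nabla w^+|^2$ with a favorable sign, exactly as in the proof of \eqref{L2 supremum estimate}; the lower-order coefficients $b^i, c$ and the quadratic variation of the noise produce terms bounded by $N|w_t^+|_2^2$. Taking expectations kills the martingale part, and Gronwall's lemma then forces $\E|w_t^+|_2^2 \leq 0$, i.e. $(u^1_t - u^2_t)^+ = 0$ a.e., which is the claim.

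The main obstacle I anticipate is justifying the It\^o formula for the non-smooth convex function $(r^+)^2$ applied to $w_t$, which is only an $H^1_0$-valued process rather than a classical solution. I would handle this exactly as in Lemma \ref{lem: moment bounds for bounded f}: introduce a twice continuously differentiable family $\phi_n$ approximating $(r^+)^2$ from above with uniformly controlled derivatives, apply the It\^o formula of \cite{KIto} to $\int_Q \phi_n(w_t)\,dx$, derive the estimate with constants independent of $n$, and pass to the limit via dominated convergence and Fatou. A secondary technical point is verifying that $w$ genuinely satisfies the stated Gel'fand-triple equation with the indicated cancellation and that Assumption \ref{as: linear growth} guarantees enough integrability of $f^i_t(u^i_t)$ for all terms to be finite; uniqueness of the $L_2$-solution (from \cite{KSEE}) then ensures $w$ is the object we expect. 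The hypothesis that one of $f^1, f^2$ satisfies Assumption \ref{Assumption_on_f} enters precisely to provide the integrability and continuity needed for these approximation arguments to converge.
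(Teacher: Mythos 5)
The paper itself contains no proof of Theorem \ref{thm: comparison}: it is imported verbatim as ``a particular case from \cite{KI}'', so there is no internal argument to compare yours against. Your proposal is the standard route to such comparison principles, and it is essentially the method of the cited reference: form $w=u^1-u^2$ (the additive terms $g^k$ indeed cancel, and by Assumption \ref{as: linear growth} the free term $f^1(u^1)-f^2(u^2)$ lies in $\mathbb{L}_2$, so $w$ is an $L_2$-solution of a linear equation), apply a regularized It\^o formula to $\int_Q\phi_n(w_t)\,dx$ with $\phi_n$ a $C^2$ approximation of $(r^+)^2$ built exactly as in Lemma \ref{lem: moment bounds for bounded f}, use Assumption \ref{as: parabolicity} to give the principal and It\^o-correction terms a favorable sign, and close with Gronwall. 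This works; a suitable choice is $\phi_n''(r)=2\min(nr^+,1)$, for which one checks $r\phi_n'(r)\leq N\phi_n(r)$, $r^2\phi_n''(r)\leq N\phi_n(r)$ and $|\phi_n'|^2\leq N\phi_n''\phi_n$, so all lower-order and cross terms are absorbed into $N\int_Q\phi_n(w)\,dx$.

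Two points should be repaired to make the argument complete. First, your drift decomposition silently assumes that $f^1$ is the function satisfying Assumption \ref{Assumption_on_f}; the theorem only assumes that \emph{one of} $f^1,f^2$ does. If it is $f^2$, use the mirrored splitting $f^1(u^1)-f^2(u^2)=\bigl(f^1(u^1)-f^2(u^1)\bigr)+\bigl(f^2(u^1)-f^2(u^2)\bigr)$, where now the first bracket is $\leq 0$ by $f^1\leq f^2$ and the second is controlled by the one-sided Lipschitz property of $f^2$. (Note also that the one-sided Lipschitz condition suffices as it stands: on $\{w>0\}$ it gives $f(u^1)-f(u^2)\leq Kw$, hence a term $Kw\phi_n'(w)\leq N\phi_n(w)$ that Gronwall absorbs; the reduction to decreasing $f$ via the Remark is a convenience, not a necessity.) Second, ``taking expectations kills the martingale part'' needs justification, since the stochastic integral is a priori only a local martingale and a naive bound on its quadratic variation involves fourth-moment quantities not provided by the $L_2$-theory. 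Either localize with $\tau_m=\inf\{t:\ |w_t|_2^2+\int_0^t|\nabla w_s|_2^2\,ds>m\}$ and pass to the limit by Fatou, or observe via Burkholder--Davis--Gundy and Cauchy--Schwarz, using $\E\sup_t|w_t|_2^2<\infty$ and $\E\int_0^T|\nabla w_t|_2^2\,dt<\infty$ from \eqref{L2 supremum estimate}, that $\E\sup_t|m_t|<\infty$, so the local martingale is a true (uniformly integrable) martingale. With these routine fixes your proof is sound and coincides in substance with the proof in \cite{KI}.
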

\begin{proof}[Proof of Theorem \ref{thm: semilinear}]
We truncate the function $f$ by setting
 \begin{equation}
 f^{n,m}_t(x,r)= \left\{
\begin{array}{rl}
f_t(x,m) & \text{if } r> m \\
f_t(x,r) & \text{if } -n \leq r \leq m \\
f_t(x,-n) & \text{if } r < -n ,
\end{array} \right. \nonumber
\end{equation}
for $n, \ m \in\mathbb{N}$ we consider the equation
\begin{align}                           \label{trancated_spde}
\nonumber
du^{n,m}_t&=( L_t u^{n,m}_t+f^{n,m}_t(u^{n,m}_t))dt
+(M^k_t u^{n,m}_t+g^k_t) dw^k_t, \\
  u^{n,m}_0&=\psi
\end{align}
We first fix  $m \in \mathbb{N}$. Equation \eqref{trancated_spde} can be realised as a stochastic evolution equation on the triple $H^1_0 \hookrightarrow L_2(\bR^d) \hookrightarrow H^{-1}$. One can easily check that under Assumptions \ref{as:coef and free terms}, \ref{as: parabolicity} and \ref{Assumption_on_f},  the conditions $(A_1)$ through $(A_5)$  from Section 3.2 in \cite{KSEE}  are satisfied, and therefore  equation \eqref{trancated_spde} has a unique $L_2-$solution $(u^{n,m}_t)_{t \in [0,T]}$. We also have that for $n' \geq n$, $f^{n',m} \geq  f^{n,m}$. By Theorem \ref{thm: comparison} we get that  almost surely, for all $t\in[0,T]$
\begin{equation}                         \label{eq: com n,n'}
u^{n',m}_t(x) \geq u^{n,m}_t(x), \ \text{ for almost every  $x$}.
\end{equation}

We define now the stopping time
$$
\tau^{R,m}:= \inf \{t \geq 0 : \int_Q (u^{1,m}_t  +R)_-^2  dx>0\}\wedge T.
$$
We claim that for each $R\in \mathbb{N}$, there exists a set $\Omega_R$ of full probability, such that   for each $\omega\in \Omega_R$, and for all $n  \geq R$ we have that 
\begin{equation}                                    \label{um=uinfty}
u^{n,m}_t = u^{R,m}_t, \ \text{ for $t \in  [0, \tau^{R,m}]$}.
\end{equation}
Notice that by \eqref{eq: com n,n'} and the definition of $\tau^{R,m}$,  for all $n \geq R$ 
$$
f^{n,m}_t(x,u^{n,m}_t(x))=f^{R,m}_t(x,u^{n,m}_t(x)), \ \text{ for $t \in  [0, \tau^{R,m}]$} .
$$
This means that for all $n \geq R$ the processes $u ^{n,m}_t $ satisfies
\begin{align}                           \label{trancated_spde_stopped}
\nonumber
dv_t &= (L_t v_t+f^{R,m}_t(v_t))dt 
+(M^k_t v_t+g^k_t) dw^k_t, \\
  v_0&=\psi,
\end{align}
on $[0, \tau^{R,m}]$.
The uniqueness of the $L_2-$solution of the above equation shows \eqref{um=uinfty}.  Notice that by Assumption \ref{Assumption_on_f} (iii) and (iv),   Theorem \ref{thm:main theorem}  guarantees that  $u^{1,m}$ is almost surely essentially bounded in $(t,x)$. Therefore,  for almost every $\omega \in \Omega$, $\tau^{R,m} =T$ for all $R$ large enough. On the set $\tilde{\Omega}:=\cap_{R\in \mathbb{N}}\Omega_R$ we define $u^{\infty,m}_t= \lim_{n\to \infty} u^{n,m}_t$, where the limit is in the sense of $L_2(Q)$. Since for each $\omega\in \tilde{\Omega}$, we have  $u^{\infty,m}_t=u^{n,m}_t$ for all $t\leq \tau^{R,m}$, and for any $n \geq R$, it follows that the process $(u^{\infty,m}_t)_{t \in [0,T]}$ is an adapted continuous $L_2(Q)-$valued process such that

i) $u^{\infty,m}_t \in H^1_0, \ \text{for $dP \times dt$ almost every} \  (\omega,t) \  \in \Omega\times [0,T]$ 

ii) $ \int_0^T  |u^{\infty,m}_t|^2_2+|\nabla u^{\infty,m}_t|^2_2  dt < \infty $(a.s.)

iii) $u^{\infty,m}_t$ is almost surely essentially bounded in $(t,x)$

iv) for all $\phi\in C_c^\infty(Q)$ we have with probability one
\[(u^{\infty,m}_t,\phi)=\int_0^t (a^{ij}_s\D_{ij}u^{\infty,m}_s, \phi)+(b^i_s\D_iu^{\infty,m}_s+c_su^m_s, \phi)+(f^m_s(u^{\infty,m}_s),\phi)ds \]
\[+\int_0^t(\sigma^{ik}_s\D_iu^{\infty,m}_s+\nu^k_su^{\infty,m}_s+g^k_s,\phi)dw^k_s+(\psi,\phi), \]
 for all $t \in [0,T]$,
 where 
  \begin{equation}
 f^m_t(x,r)= \left\{
\begin{array}{rl}
f_t(x,m) & \text{if } r> m \\
f_t(x,r) & \text{if }   r \leq m. 
\end{array} \right. \nonumber
\end{equation}
Now we will let $m \to \infty$.  Let us define the stopping time
$$
\tau^R:= \inf \{ t \geq 0 : \int_Q ( u^ {\infty,1}_t-R)_+^2 dx >0\} \wedge T.
$$
As before we claim that for any $R>0$, there exists a set $\Omega'_R$ of full probability, such that for any $\omega \in \Omega'_R$ and any  $m, m' \geq R$,
\begin{equation}                           \label{eq:m,m'}
u^{\infty,m'} _t= u^{\infty, m}_t \ \text{ on $[0, \tau^R]$}.
\end{equation}
To show this it suffices to show  that for each $R\in \mathbb{N}$, almost surely, for all  $m \geq R$, we have
$u^{n, m}_t= u^{n, R}_t \ \text{ on $[0, \tau^R]$}$ for all $n \in \mathbb{N}$. To show this we set 
$$
\tau^R_n :=\inf \{ t \geq 0 : \int_Q  (u^ {n,1}_t-R)^2_+dx>0 \} \wedge T.
$$
For all $m \geq R$ we have that the processes $u^{n,m}_t$ satisfy the equation
\begin{align}                           
\nonumber
dv_t &=(L_t v_t +f^{n,R}_t(v_t))dt +(M^k_t v_t+g^k_t\} dw^k_t, \\
  v_0(x)&=\psi(x),
\end{align}
for $t \leq \tau^R_n$.
It follows  that almost surely, 
$u^{n, m}_t= u^{n, R}_t$ for $t \leq \tau^R_n$, for all $n$. We just note here that by the comparison principle again, we have $\tau^R \leq \tau^R_n$ and this shows \eqref{eq:m,m'}. Also for almost every  $\omega \in \Omega$, we have $\tau^R =T$ for $R$ large enough. Hence we can define $u_t= \lim_{m \to \infty}u^{\infty,m}_t$, and then one can easily see that $u_t$ has the desired properties.

For the uniqueness, let $u^{(1)}$ and $u^{(2)}$ be solutions of \eqref{SLSPDE}. Then one can define the stopping time 
$$
\tau_N= \inf\{t \geq0 : \int_Q(|u^{(1)}_t|-N)^2_+ dx\vee \int_Q(|u^{(2)}_t|-N)^2_+dx>0 \},
$$
 to see that for $t \leq \tau_N$, the two solutions satisfy equation \eqref{trancated_spde} with $n=m=N$, and the claim follows, since $\tau_N=T$ almost surely, for large enough $N$.
  
\end{proof}

\section*{Acknowledgements}
The authors would like to express their gratitude towards their PhD advisor, Professor Istv\'an Gy\"ongy, for his help and support during the preparation of this paper. They are furthermore grateful to the anonymous referee for her/ his helpful suggestions.

% BibTeX users please use one of
%\bibliographystyle{spbasic}      % basic style, author-year citations
%\bibliographystyle{spmpsci}      % mathematics and physical sciences
%\bibliographystyle{spphys}       % APS-like style for physics
%\bibliography{}   % name your BibTeX data base

% Non-BibTeX users please use

\end{document}